\newcommand{\lqs}{\leqslant}
\newcommand{\gqs}{\geqslant}
\newcommand{\lb}{\label}
\newcommand{\ol}{\overline}
\newcommand{\dom}{\operatorname{dom}}
\newtheorem{theorem}{Theorem}[section]
\newtheorem{lemma}[theorem]{Lemma}
\newtheorem{corollary}[theorem]{Corollary}
\theoremstyle{definition}
\newtheorem{definition}[theorem]{Definition}
\newtheorem*{remark}{Remark}
\newtheorem{example}[theorem]{Example}
\numberwithin{equation}{section}
\newcommand{\supp}{{\operatorname{supp}}}
\newcommand{\ac}{\mathrm{ac}}
\newcommand{\cB}{{\mathcal{B}}}
\newcommand{\cD}{{\mathcal{D}}}
\newcommand{\cF}{{\mathcal{F}}}
\newcommand{\cH}{{\mathcal{H}}}
\newcommand{\dd}{\mathrm{d}}
\renewcommand{\Re}{\operatorname{Re}}
\renewcommand{\Im}{\operatorname{Im}}
\newcommand{\ess}{\text{\rm{ess}}}
\newcommand{\bbZ}{\mathbb{Z}}
\newcommand{\bbR}{\mathbb{R}}
\newcommand{\bbC}{\mathbb{C}}
\newcommand{\AC}{\mathrm{AC}}
\newcommand{\loc}{\mathrm{loc}}
\newcommand{\unif}{\mathrm{unif}}
\DeclareMathOperator{\intt}{int}
\begin{document} 
	
\numberwithin{equation}{section}
\allowdisplaybreaks

\title[Modified Jost solutions]{Modified Jost solutions of Schr\"odinger operators with locally $H^{-1}$ potentials}

%
%
\author[M.\ Luki{\'c}]{Milivoje Luki{\'c}}
\address{ Department of Mathematics, Rice University, Houston, TX 77005, USA}
\email{milivoje.lukic@rice.edu}
\thanks{M.L.\ was supported in part by NSF grant DMS--2154563.}

\author[X.\ Wang]{Xingya Wang}
\address{ Department of Mathematics, Rice University, Houston, TX 77005, USA}
\email{xw62@rice.edu}
\thanks{X.W.\ was supported in part by NSF grant DMS--2154563.}


\date{\today}
\keywords{Schr\"odinger operators, singular potentials, spectral type, decaying potentials, WKB asymptotic behavior}

\begin{abstract}
We study Jost solutions of Schr\"odinger operators with potentials which decay with respect to a local $H^{-1}$ Sobolev norm; in particular, we generalize to this setting the results of Christ--Kiselev for potentials between the integrable and square-integrable rates of decay, proving existence of solutions with WKB asymptotic behavior on a large set of positive energies. This applies to new classes of potentials which are not locally integrable, or have better decay properties with respect to the $H^{-1}$ norm due to rapid oscillations.
\end{abstract}

\maketitle

{\scriptsize{\tableofcontents}}


\section{Introduction}

One-dimensional Schr\"odinger operators $- \frac{d^2}{dx^2} +V$ with decaying potentials $V$ are often studied by  comparison with the free Schr\"odinger operator (case $V=0$).  In particular, Jost solutions are eigensolutions which asymptotically behave similarly to eigensolutions of the free Schr\"odinger operator. More precisely, let $E > 0$ and
\[
k = \sqrt E.
\]
If $V$ is compactly supported,  there are solutions of $-u''+ Vu = Eu$ such that $u(x) = e^{i kx}$ for all large enough $x$, and if $V \in L^1((0,\infty))$, there are solutions of $-u'' + Vu = Eu$ with asymptotic behavior
\[
u(x) = e^{i k x} +o(1), \qquad x \to + \infty.
\]
Note that the $L^1$ condition serves two purposes: local integrability of $V$ is a common assumption on the potential, affecting everything from self-adjointness on \cite{TeschlBook,LukicBook}; moreover, the global $L^1$ condition serves as a fast decay assumption. To allow slower decay at infinity without imposing stronger local assumptions, one often uses the spaces
\[
\ell^p(L^q) = \{ f \mid \sum_{n=0}^\infty \lVert f \chi_{(n,n+1)} \rVert_q^p < \infty \}.
\]
For weaker  decay assumptions on $V$, existence of modified Jost solutions with the WKB asymptotic behavior
\begin{equation}\label{WKBL1}
u(x,E) = e^{ik x - \frac{i}{2k} \int_0^x V(t)\,dt} + o(1), \qquad x \to +\infty
\end{equation}
was studied by Kiselev \cite{Kiselev96,Kiselev98}, Remling \cite{Remling98,RemlingPAMS1998,RemlingDuke2000}, Christ--Kiselev \cite{ChristKiselev98,CAK2001CMP,ChristKiselev,ChristKiselev02}; for instance, if $V \in \ell^p(L^1)$ for some $p \in (1,2)$, eigensolutions obeying \eqref{WKBL1} exist for Lebesgue-a.e.\ $E > 0$ \cite{ChristKiselev}, and with some power law decay in the form of a condition $(1+x)^\gamma V \in \ell^p(L^1)$ with $\gamma>0$, $p\in (1,2]$, there is a bound on the Hausdorff measure of the bad set of positive energies without the WKB asymptotic behavior \cite{CAK2001CMP}.

Jost solutions are bounded, so through subordinacy theory \cite{MR915965,Stolz92,MR1738043,MR1761504,EichingerLukicSimanek}, they imply absolute continuity of the spectral measure on the corresponding set of energies. They are used in one-dimensional scattering theory to show existence and completeness of wave operators \cite{Agmon75, Hormander76,RS3,Kato95,ChristKiselev02}, see also \cite{Denisov04,Bessonov20,DenisovMohamed21,BessonovDenisov23}.  They are also the basis for inverse scattering on the line \cite{DeiftTrubowitz}, and they are related to Szeg{\H{o}} asymptotics \cite{DamanikSimon06,DamanikSimon06b}. The generalized Jost solutions in this paper can serve as the basis for further investigations in all these directions.

In this paper, we study Schr\"odinger operators with locally $H^{-1}$ potentials, which is more general than the local $L^1$ assumption mentioned above. Their study was initiated by Hryniv–Mykytyuk \cite{HrMyk01,MR2978191} in the full-line setting, within a long literature on operators with singular coefficients including \cite{WeidmannODEs,MR1756602,EGNT13}. In particular, \cite{HrMyk01} used an explicit molifier $\phi \in H^1(\bbR)$ with $\supp \phi = [-1,1]$ such that $\sum_{n\in\bbZ} \phi( \cdot - n) = 1$; a potential $V$ is locally $H^{-1}$ if $V \phi(\cdot - n) \in H^{-1}(\bbR)$ for all $n$. They constructed a decomposition
\begin{equation}\label{HMdecomposition}
V = \sigma' + \tau
\end{equation}
with $\sigma\in L^2_\loc(\bbR)$, $\tau\in L^1_\loc(\bbR)$. This decomposition is local, in the sense that values of $\sigma,\tau$ on $(a,b)$ only depend on the action of the distribution $V$ on test functions $\phi \in C_0^\infty((a-c,b+c))$ for some universal constant $c>0$, and obeys 
\begin{equation}\label{HMinequality}
C^{-1} \sup_{n} \lVert V \phi(\cdot -n) \rVert_{H^{-1}(\bbR)} \le \sup_x \left( \lVert \sigma \chi_{(x,x+1)} \rVert_2 +  \lVert \tau \chi_{(x,x+1)} \rVert_1 \right) \le C \sup_{n} \lVert V \phi(\cdot -n) \rVert_{H^{-1}(\bbR)}
\end{equation}
for some universal constant $C$. If these quantities are finite, $V$ is said to be locally uniformly $H^{-1}$. For particular choices of $V$, one does not have to use the exact  $\sigma, \tau$ constructed by \cite{HrMyk01}, and relevant statements are independent of the choice of decomposition.

This level of generality obviously allows a greater family of potentials to be studied, including Dirac delta terms $\delta_{x_0}$ and Coulomb singularities $\lvert x - x_0 \rvert^{-1}$ at internal points $x_0$. There are also other motivations for the locally $H^{-1}$ setting. The decomposition \eqref{HMdecomposition} is related to the Miura transformation and the Riccati representation \cite{Korotyaev,KappelerTopalov} for periodic $V$; however, the non-periodic, infinite interval setting requires two functions $\sigma, \tau$, where $\tau$ takes the role of a local average, and $\sigma'$ contains the less smooth part of the potential. The representation \eqref{HMdecomposition} was observed to be useful even for $V \in L^1_\loc$ \cite{DenisovMohamed21}, and can be motivated also through the connection to a square of a Dirac operator.

The final motivation is that the $H^{-1}$ norm is less sensitive to rapid oscillations; thus, rapidly oscillating potentials can seem decaying with respect to a local $H^{-1}$ norm, even if they are not classically decaying, or they can seem to be decaying at a faster rate. We will illustrate this below with Example~\ref{WKBoscillatoryexample}.

The half-line setting is natural for the goals of this paper, so we consider half-line distributions $V \in \cD'(\bbR_+)$ of the form \eqref{HMdecomposition} for some $\sigma \in L^2_{\loc,\unif}(\bbR_+)$, $\tau \in L^1_{\loc,\unif}(\bbR_+)$. If formally necessary, one can assume $\sigma(x)=\tau(x) = 0$ for $x < 0$. The quasiderivative \cite{WeidmannODEs,MR1756602} of a locally absolutely continuous function $u$ is
 \[
 u^{[1]}:=u'-\sigma u,
 \]
and the formal action of the Schr\"odinger operator is defined on the local domain
\begin{equation}
\mathfrak D:=\{u\in \AC_\loc(\bbR_+): u^{[1]}\in \AC_\loc(\bbR_+)\}
\end{equation}
by
\begin{equation}
\ell u:=-(u^{[1]})' - \sigma u^{[1]} + (\tau-\sigma^2)u. \lb{difex}
\end{equation}
Half-line self-adjoint Schr\"odinger operators $H$ on the Hilbert space $L^2(\bbR_+)$ are obtained \cite{HrMyk01,LSW} by restricting $\ell$ to the domains
\[
\dom(H):=\{u\in L^2(\bbR_+) \mid u \in \mathfrak D,\, \ell u\in L^2(\bbR_+), \, u(0)\cos(\alpha) +u^{[1]}(0)\sin(\alpha)=0 \}
\]
where $\alpha$ labels the boundary condition at $0$. Likewise, a formal eigensolution of $H$ at energy $E$ is a function $u\in \mathfrak D$ such that $\ell u = E u$ in the sense of equality of $L^1_\loc$ functions. Although $\sigma, \tau$ are prominent in these definitions, different choices of decomposition \eqref{HMdecomposition} lead to the same operator $H$ up to a change of the value of $\alpha$ \cite[Remark 2.2]{LSW}; the Dirichlet operator $\alpha=0$ is unchanged. As in the $L^1_\loc$ setting, these half-line operators have simple spectrum and a canonical spectral measure $\mu$. General criteria for spectral type were studied in \cite{LSW}.

A potential $V$ is said to be $H^{-1}$-decaying if 
\[
 \lVert V \phi(\cdot -n) \rVert_{H^{-1}} \to 0, \qquad n \to\infty.
\]
Due to \eqref{HMinequality}, we think of $\lVert \sigma \chi_{(x,x+1)} \rVert_2 +  \lVert \tau \chi_{(x,x+1)} \rVert_1$ as the local size of the potential, and for an $H^{-1}$-decaying potential, we assume that $\sigma, \tau$ are chosen so that
\[
\lVert \sigma \chi_{(x,x+1)} \rVert_2 +  \lVert \tau \chi_{(x,x+1)} \rVert_1  \to 0, \qquad x \to \infty.
\]
By a quadratic form argument \cite{LSW}, if $V$ is $H^{-1}$-decaying, $\sigma_\ess(H) = [0,\infty)$.  Finally, to describe rates of decay, we define spaces of half-line distributions
\[
\ell^p(H^{-1}) = \{ \sigma' + \tau \mid \sigma \in \ell^p(L^2), \tau \in \ell^p(L^1) \}.
\]

\begin{definition}
For an $H^{-1}$-decaying potential $V$, we say an eigensolution $u$ of $H_V$ at energy $E = k^2$ has WKB asymptotic behavior if 
\begin{equation}\label{WKB}
u(x) = e^{ik x - \frac{i}{2k} \int_0^x \tau(t)\,dt}  + o(1), \qquad x \to +\infty,
\end{equation}
\begin{equation}\label{WKBquasiderivative}
u^{[1]}(x) = ik e^{ik x - \frac{i}{2k} \int_0^x \tau(t)\,dt}  + o(1), \qquad x \to +\infty.
\end{equation}
\end{definition}

We explain in Lemma~\ref{lemmaWKBgaugechange} in what sense this is independent of decomposition.

This regime was not previously studied in the literature, so even the following short range result is new (although its spectral consequences were described in \cite{LSW}):

\begin{theorem}\label{thmWKBshortrange}
If $V \in \ell^1(H^{-1})$, then for every $E > 0$, there is an eigensolution with the WKB asymptotic behavior.
\end{theorem}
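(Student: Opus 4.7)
The plan is to reduce the equation $\ell u = Eu$ to a linear first-order system for variation-of-parameters coefficients tied to the WKB ansatz, and then solve that system by Volterra iteration with the prescribed asymptotic normalization.

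Fix a decomposition $V = \sigma' + \tau$ with $\sigma \in \ell^1(L^2(\bbR_+))$ and $\tau \in \ell^1(L^1(\bbR_+))$, and set $\Phi_\pm(x) := e^{\pm ikx \mp \frac{i}{2k} \int_0^x \tau(t)\, dt}$. Introduce new unknowns $(a_+, a_-)$ via the invertible change of variables
\[
u = a_+ \Phi_+ + a_- \Phi_-, \qquad u^{[1]} = i k \left( a_+ \Phi_+ - a_- \Phi_- \right).
\]
Differentiating these identities and substituting the first-order form of $\ell u = Eu$, namely $u' = u^{[1]} + \sigma u$ and $(u^{[1]})' = -\sigma u^{[1]} + (\tau - \sigma^2 - k^2) u$, together with $\Phi_\pm' = \pm(ik - \frac{i\tau}{2k}) \Phi_\pm$, one finds that the leading $O(k)$ terms and the $\sigma$-terms (the latter thanks to the quasiderivative) cancel. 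What remains is a linear system $\vec{a}\,' = M(x) \vec{a}$ whose diagonal entries are $\pm \frac{i\sigma^2}{2k}$ and whose off-diagonal entries are $\left(\pm \sigma \mp \frac{i(\tau - \sigma^2)}{2k}\right)\Phi_\mp / \Phi_\pm$.

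Crucially, $M \in L^1(\bbR_+)$. Cauchy--Schwarz on each unit interval yields $\sigma \in \ell^1(L^1) \subset L^1(\bbR_+)$; the identity $\lVert \sigma^2 \chi_{(n,n+1)} \rVert_1 = \lVert \sigma \chi_{(n,n+1)} \rVert_2^2$ combined with $\ell^1 \subset \ell^2$ gives $\sigma^2 \in L^1(\bbR_+)$; and $\tau \in \ell^1(L^1) = L^1(\bbR_+)$. The factors $\Phi_\mp / \Phi_\pm$ are unimodular and hence harmless. It follows that the Volterra equation
\[
\vec{a}(x) = \begin{pmatrix} 1 \\ 0 \end{pmatrix} - \int_x^{\infty} M(t) \vec{a}(t) \, dt
\]
has a unique bounded solution on $\bbR_+$, obtained as a Neumann series whose $n$-th term carries a factor $1/n!$ from the simplex ordering of the iterated integrals. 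The equation above, together with $\int_x^\infty \lVert M(t) \rVert \, dt \to 0$ as $x \to \infty$, forces $\vec{a}(x) \to (1, 0)^\top$. Since $\lvert \Phi_\pm \rvert = 1$, translating back via the change of variables produces an eigensolution of $\ell u = Eu$ satisfying \eqref{WKB} and \eqref{WKBquasiderivative}.

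The main obstacle is the algebraic reduction in the first step: one must choose the ansatz so that the distributional part $\sigma'$ of $V$ is absorbed entirely into the quasiderivative bookkeeping, and extract the WKB phase from $\tau$ rather than from $V$, so that the resulting coefficient matrix $M$ has genuine $L^1$ entries rather than merely distributional ones. Once this has been arranged, the Volterra iteration is standard.
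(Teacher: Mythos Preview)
Your argument is correct and follows essentially the same route as the paper: a WKB-type change of variables reduces the eigenfunction equation to a first-order system with $L^1(\bbR_+)$ coefficients, which is then solved by Volterra iteration with the asymptotic normalization at infinity (the paper packages this as Theorem~\ref{equivSolutions} followed by Corollary~\ref{corShortRangeCase}). The only cosmetic difference is that the paper puts $\tau-\sigma^2$ rather than $\tau$ alone into the WKB phase, obtaining a coefficient matrix with zero diagonal, but in the short-range regime this is immaterial since $\sigma^2\in L^1$ either way.
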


The main results of this paper are two theorems for potentials which decay at a slower rate; these are generalizations of results of Christ--Kiselev to the locally $H^{-1}$ norm. The first works with potentials in an $\ell^p(H^{-1})$ space:

\begin{theorem}\label{thmWKB}
If $V \in \ell^p(H^{-1})$ for some $p\in (1,2)$, then for Lebesgue-a.e.\ $E > 0$, there is an eigensolution with the WKB asymptotic behavior.  In particular, the absolutely continuous part of the Schr\"odinger operator $H$ is unitarily equivalent to the half-line Dirichlet Laplacian.
\end{theorem}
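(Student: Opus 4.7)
The plan is to adapt the Christ--Kiselev multilinear machinery from the $\ell^p(L^1)$ setting to the locally $H^{-1}$ setting by working in the quasiderivative formalism. First introduce modified Pr\"ufer-type variables $A(x,E), \bar A(x,E)$ by writing
\[
\begin{pmatrix} u(x) \\ u^{[1]}(x) \end{pmatrix} = M(x,E) \begin{pmatrix} A(x,E) \\ \bar A(x,E) \end{pmatrix},
\]
where the columns of $M$ are the free modified plane waves $e^{\pm i(kx-\frac{1}{2k}\int_0^x \tau(t)\,dt)}$ together with their quasiderivatives, normalized so that $\det M$ is a nonzero constant. Substituting into $\ell u = Eu$ converts the eigenvalue equation into a first-order linear system for $(A,\bar A)$ whose coefficients depend linearly on $\tau$ and on $\sigma$. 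The WKB asymptotic behavior \eqref{WKB}--\eqref{WKBquasiderivative} is precisely the statement that $A(x,E) \to 1$ and $\bar A(x,E) \to 0$ as $x \to \infty$, equivalently that $(A,\bar A)$ solves the Volterra integral equation $A = 1 + TA$ obtained by integrating the system from $\infty$.

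Because $\sigma$ is only $L^2_\loc$, the naive estimate on $T$ is insufficient and one must integrate by parts to move the derivative off the $\sigma$ in the $\sigma'$-contribution. Integration by parts produces three types of terms: boundary terms, which vanish under the normalization at infinity; a term involving $\sigma^2$, which is automatically short range, since $\sigma \in \ell^p(L^2)$ with $p\in(1,2)$ forces $\sigma^2 \in \ell^{p/2}(L^1) \subset L^1(\bbR_+)$; and a term involving $\sigma$ itself multiplied by a rapidly oscillating factor $e^{\pm 2ikx - \frac{i}{k}\int_0^x \tau}$. The short-range $\sigma^2$-contribution is absorbed by a fixed-point argument of the type used for Theorem~\ref{thmWKBshortrange}, while the oscillatory $\sigma$-term is put on equal footing with the $\tau$-term via the unit-interval embedding $\ell^p(L^2) \subset \ell^p(L^1)$.

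With these reductions one expands $A(x,E) = \sum_{n\ge 0} A_n(x,E)$ as a Dyson series, with $A_n$ an $n$-fold ordered integral over a simplex in $(x,\infty)^n$ against a product of $n$ densities drawn from $\{\sigma,\tau\}$ together with oscillatory factors. The Christ--Kiselev multilinear maximal inequality, applied on any compact $[E_0,E_1] \subset (0,\infty)$, yields a bound of the form
\[
\Bigl\| \sup_{x > 0} |A_n(x,\cdot)| \Bigr\|_{L^2([E_0,E_1])} \le \frac{C^n}{(n!)^{1/2}}\,\|V\|_{\ell^p(H^{-1})}^n,
\]
with $C$ depending on $[E_0,E_1]$. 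Summing in $n$ gives a finite $L^2([E_0,E_1])$-norm for $\sup_x |A(x,E)|$, hence convergence of the series for a.e.\ $E \in [E_0,E_1]$; exhausting $(0,\infty)$ by such intervals produces WKB eigensolutions for a.e.\ $E > 0$. The claim about the absolutely continuous part of $H$ then follows from the boundedness of the WKB eigensolutions together with subordinacy theory, as cited in the introduction.

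The main obstacle is the combinatorial bookkeeping in the multilinear step: in contrast to the classical setting with a single $\ell^p(L^1)$ density, the iterated integrals now involve two densities of different local regularities, each paired with its own oscillatory phase, so one must verify that the Christ--Kiselev kernel bounds are uniform over all $2^n$ mixed $\sigma/\tau$ strings. One must also confirm that after each integration by parts the $\sigma^2$-remainders truly aggregate into a globally short-range contribution rather than accumulating across iterations; this is where the embedding $\ell^{p/2}(L^1) \subset \ell^1(L^1)$, valid exactly for $p \le 2$, forces the same upper bound $p < 2$ that appears in the classical Christ--Kiselev result.
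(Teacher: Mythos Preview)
Your outline has the right high-level ingredients (Christ--Kiselev multilinear estimates, the short-range nature of $\sigma^2$, subordinacy for the spectral claim), but the change of variables you propose is not well posed, and the two obstacles you flag at the end are never actually resolved. You say you work in the quasiderivative formalism and build $M$ from the WKB plane waves $e^{\pm i(kx-\frac{1}{2k}\int_0^x\tau)}$ together with their quasiderivatives. But the quasiderivative of such a plane wave is $(\pm i(k-\tfrac{\tau}{2k})-\sigma)e^{\pm i\psi}$, which is only $L^1_\loc$, not $\AC_\loc$; hence $M\notin\AC_\loc$, $\det M$ is not constant, and the transformed system for $(A,\bar A)$ acquires distributional coefficients (formally $(\ell-E)\phi_\pm$ contains $\sigma'$ and $\tau'$). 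This is why you find yourself needing to ``integrate by parts to move the derivative off the $\sigma$ in the $\sigma'$-contribution'' despite having earlier asserted that the coefficients are linear in $\sigma,\tau$: in the quasiderivative matrix \eqref{eigenfunctionMatrix} no $\sigma'$ ever appears, so its reappearance is a symptom of the ill-posed $M$. The boundary terms you dismiss also do not vanish---integration by parts over $[x,\infty)$ leaves a live term $\sigma(x)A(x)$ at the finite endpoint, and inside the $n$-fold simplex the boundaries are $t_j=t_{j\pm1}$, not $\infty$.

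The paper avoids all of this by choosing a change of variables whose matrix is genuinely $\AC_\loc$: in Theorem~\ref{equivSolutions} one uses $\bigl(\begin{smallmatrix}ik&-ik\\1&1\end{smallmatrix}\bigr)\diag(e^{ih/2},e^{-ih/2})$ with $h(x,E)=2kx-\frac1k\int_0^x(\tau-\sigma^2)$, so $h$ is an antiderivative of an $L^1_\loc$ function. The transformed system $Y'=DY$ then has the single $L^1_\loc$ off-diagonal density $\cF(x,E)=w(E)e^{-ih(x,E)}\tilde Q(x,E)$ with the energy-dependent effective potential $\tilde Q=(\tau-\sigma^2)+2ik\sigma$. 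The $\sigma^2$ arrives for free from the entry $\tau-\sigma^2$ of \eqref{eigenfunctionMatrix}, no integration by parts is performed, and the multilinear expansion is in the \emph{one} function $\cF$ rather than in $2^n$ mixed $\sigma/\tau$ strings. The splitting into $(\tau-\sigma^2)$- and $\sigma$-pieces happens only once, at the level of the $\cB_1$-seminorm estimate (Lemma~\ref{coro4.3}), via two bounded operators $S_w,S_1:\ell^p(L^1)\to L^{p'}$; this is what replaces your unresolved combinatorial bookkeeping.
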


Combining this with some power law decay also bounds the Hausdorff dimension of the set of positive energies without WKB behavior: 

\begin{theorem}\label{thmWKBhausdorff}
Let $p\in (1,2]$, $\gamma>0$ with $\gamma p'\lqs 1$, where $1/p + 1/p' = 1$. If $(1+x)^{\gamma}V(x) \in \ell^p(H^{-1})$, there exists a set $\Lambda$ of Hausdorff dimension $\dim_{\cH} \Lambda \lqs 1-\gamma p'$ such that for all $E\in (0,\infty)\setminus \Lambda$, there exists an eigensolution $Hu=Eu$ with the WKB asymptotic behavior. In particular, the singular part of the spectral measure of $H$ is supported on a set of Hausdorff dimension at most $1-\gamma p'$. 
\end{theorem}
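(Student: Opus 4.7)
The plan is to refine the argument that proves Theorem~\ref{thmWKB} by inserting the spatial weight $(1+x)^\gamma$ into the multilinear estimates and then converting the resulting weighted $L^{p'}$-control in the energy variable into a Hausdorff dimension bound, following the general scheme of \cite{CAK2001CMP}.

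The first step is a modified Pr\"ufer transformation adapted to the quasiderivative: write
\begin{equation*}
u(x) = a(x,E)\,e^{i\psi(x,k)} + b(x,E)\,e^{-i\psi(x,k)}, \qquad \psi(x,k) := kx - \frac{1}{2k}\int_0^x \tau(t)\,dt,
\end{equation*}
together with the corresponding representation of $u^{[1]}$, so that the WKB conclusion \eqref{WKB}--\eqref{WKBquasiderivative} becomes the existence of a solution with $a(x,E)\to 1$ and $b(x,E)\to 0$ as $x\to\infty$. The first-order system governing $(a,b)$ involves only oscillatory combinations of the residual data: the local-average part of $\tau$ has been absorbed into the phase, while the $\sigma'$ part is handled by integration by parts against the oscillatory factors, which transfers derivatives either onto the exponentials (producing safety factors of $1/k$) or onto $\sigma$ itself (producing $\sigma^2$ contributions that can be absorbed into a reshuffled decomposition).

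Iterating the resulting Volterra equation yields a multilinear expansion $a(x,E) - 1 = \sum_{n\geq 1} T_n(x,E)$, in which each $T_n$ is an $n$-fold ordered integral of products of oscillatory factors built from $\sigma$ and $\tau-\bar\tau$. Applying a weighted version of the Christ--Kiselev maximal inequality, with the spatial weight $(1+x_j)^{\gamma}$ distributed across the factors, bounds $\sup_{x}|T_n(x,E)|$ in the appropriate $L^{p'}$-norm in $E$ (carrying a weight coming from the change of variables $k = \sqrt{E}$) by $C^n (n!)^{-1/p'} \lVert (1+x)^\gamma V \rVert_{\ell^p(H^{-1})}^n$, which is summable in $n$. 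A Frostman-type covering argument, parallel to the one in \cite{CAK2001CMP}, then converts this weighted integrability into the Hausdorff dimension bound $\dim_{\cH}\Lambda \leq 1 - \gamma p'$, and the spectral consequence for the singular part of the spectral measure follows from subordinacy theory applied on the complement of $\Lambda$.

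The main obstacle is to propagate the weight $(1+x)^\gamma$ through the integration-by-parts manipulations that dualize the distributional $\sigma'$ into pointwise estimates in each factor: each integration by parts produces boundary contributions and derivatives of the weight, and these must all be reabsorbed into $\lVert (1+x)^\gamma V \rVert_{\ell^p(H^{-1})}$ without spoiling the factorial decay $(n!)^{-1/p'}$ on which summability of the multilinear series depends. A secondary technical point is to confirm that the energy-variable weight produced by the change of variables $k=\sqrt{E}$ combines with the resulting $L^{p'}$-bound to give exactly the exponent $1-\gamma p'$ in the Frostman-type conversion, in the borderline case $\gamma p' = 1$ as well as in the interior.
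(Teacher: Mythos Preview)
Your outline captures the broad architecture---multilinear expansion plus an appeal to the machinery of \cite{CAK2001CMP}---but it misidentifies where the real work lies, and the proposed route through iterated integration by parts is not how the paper (or \cite{CAK2001CMP}) proceeds.

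The paper avoids integration by parts entirely. Theorem~\ref{equivSolutions} rewrites the eigensolution equation, via the quasiderivative and a phase conjugation, as a first-order system $Y' = D(x,E)Y$ in which $\sigma$ enters not as $\sigma'$ but through the effective, \emph{energy-dependent} potential $\tilde Q(x,E) = \tau(x) - \sigma(x)^2 + 2ik\sigma(x)$. This packages the distributional part cleanly: no boundary terms, no derivatives of weights. The price is that the integrand in each multilinear factor now depends on $E$ through the $2ik\sigma$ term, which forces the two-operator version of the martingale/$\mathcal B_s$ estimates (Lemma~\ref{ck-JFA-proposition3.3}(b), Lemma~\ref{ck-JFA-lemma3.4-maximalOp}). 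Your ``integration by parts against the oscillatory factors'' would, if carried out, reproduce exactly this $2ik\sigma$ term, but doing it inside each factor of the $n$-fold ordered integral, with weights attached, is strictly harder than doing it once at the level of the first-order system.

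The more serious gap is in the Hausdorff step. The conversion from weighted spatial decay to a Hausdorff dimension bound in \cite{CAK2001CMP} is not a Frostman argument applied to a single weighted $L^{p'}$ bound; it is a complex-interpolation argument that requires, for $\mathcal F_z(x,E) := (1+x)^z \mathcal F(x,E)$, both an $L^{p'}(dE)$ bound on $\lVert \mathcal F_z(\cdot,E)\rVert_{\mathcal B_2}$ along $\Re z = \gamma$ \emph{and} an $L^{p'}(dE)$ bound on $\lVert \partial_E \mathcal F_z(\cdot,E)\rVert_{\mathcal B_2}$ along $\Re z = \gamma - 1$. The second condition is where the weight is actually spent: differentiating the phase $e^{-ih(x,E)}$ produces a factor $\partial_E h(x,E)$ that grows linearly in $x$, and this growth is absorbed precisely by the extra power of $(1+x)$ made available when $\Re z$ drops from $\gamma$ to $\gamma - 1$. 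This is the content of Lemma~\ref{ck-CMP-sec8}, and it is the longest calculation in the paper. Your proposal does not mention energy derivatives at all; the ``energy-variable weight from $k = \sqrt E$'' you allude to is not the mechanism that produces the exponent $1 - \gamma p'$.

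Two smaller points: the factorial decay from Theorem~\ref{multilinear-christ-kiselev-lemma} is $(n!)^{-1/2}$, not $(n!)^{-1/p'}$; and the decomposition you should work with is $Q = \tau - \sigma^2$ furnished by Lemma~\ref{weightedEllpH-1space}, not a further splitting of $\tau$ into a local average and a remainder.
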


We note that by H\"older's inequality, if $(1+x)^{\gamma}V(x) \in \ell^p(H^{-1})$, then $V \in \ell^r(H^{-1})$ for all $r > p / (1+p\gamma)$. In particular,  conclusions of Theorem~\ref{thmWKB} apply to the potentials of Theorem~\ref{thmWKBhausdorff}. Moreover, the case $\gamma p' > 1$ is already covered by Theorem~\ref{thmWKBshortrange}, since then $V \in \ell^1(H^{-1})$ by H\"older's inequality.

The first part of the analysis is pointwise in energy; it is a rewriting of the 2nd order ODE as a first-order vector ODE, with a change of variables accounting for the WKB asymptotics. This results in an initial value problem with an initial condition at infinity.  The resulting ODE has $L^1_\loc$ coefficients but a more complicated form than the classical case, with $\sigma, \tau$ appearing in different places and a nonlinearity in the form of a $\sigma^2$ term. An effective term replacing $V(x)$ in this initial value problem turns out to be the complex-valued, energy-dependent expression
\[
\tilde Q(x,E) = \tau(x) - \sigma(x)^2 + 2 i \sqrt E \sigma(x),
\]
which complicates further analysis.

The main part of the proof of Theorems~\ref{thmWKB}, \ref{thmWKBhausdorff} combines the original proof of Christ--Kiselev \cite{ChristKiselev98} with technical extensions introduced by Christ--Kiselev \cite{CAK2001CMP} in order to study linear combinations of terms with different decay properties; in our work, these extensions are used to handle energy-dependent linear combinations stemming from the effective potential $\tilde Q(x,E)$. Note that whereas \cite{CAK2001CMP} allows slowly decaying terms whose derivative is in an $L^p$ space, our work goes in the opposite direction and allows the potential to be a derivative. We also use some contributions of Liu~\cite{liu_multilinear}, who studied perturbations of periodic Schr\"odinger operators.

One motivation for Theorems~\ref{thmWKB}, \ref{thmWKBhausdorff} are potentials consisting of terms which are not locally integrable. 
For instance, the above theorems apply to combinations of $\delta$-functions
\[
V = \sum_{n=1}^\infty a_n \delta_n
\]
with a suitably decaying sequence of $a_n$. Another motivation is that fast oscillations make a potential appear smaller in $H^{-1}$ norm. For instance, a suitable potential of the form
\begin{equation}\label{eqnPotentialExample}
V(x) = g(x) \sin(x^b)
\end{equation}
where $g(x)$ behaves roughly as $x^a$, may appear to behave roughly as $x^{a+1-b}$ in local $H^{-1}$ norm, which is an improvement if $b >1$. We make this precise in the following example. Recall that a function $f:(0,\infty) \to (0,\infty)$ is said to be regularly varying (at $\infty$) of order $\rho$ if $f(\lambda x) / f(x) \to \lambda^\rho$ as $x \to\infty$ for every $\lambda > 0$.

\begin{example}\label{WKBoscillatoryexample}
Let $V$ be of the form \eqref{eqnPotentialExample}, where $g \in \AC_\loc((0,\infty))$ and  $g'$ is a regularly varying function of order $a-1$. Denote $c = b - a -1$. 
	\begin{itemize}
		\item [(a)] if $c>0$, then $V$ is $H^{-1}$-decaying, so $\sigma_{\ess}(H) = [0,\infty)$.
		\item [(b)] if $c>\frac{1}{2}$, then $V\in \ell^p(H^{-1})$ for $p\in (1/c,2)$, so by Theorem \ref{thmWKB}, $\sigma_{\ac}(H) = [0,\infty)$.
		\item [(c)] if $\frac{1}{2}<c\lqs 1$, then $(1+x)^{\gamma} V(x) \in \ell^2(H^{-1})$ for $\gamma \in (0,  c-1/2)$, so by Theorem \ref{thmWKBhausdorff},  $\dim_{H}(S) \le 2-2c$.
		\item [(d)] if $c>1$, then $V\in \ell^1(H^{-1})$, so $H$ has purely a.c. spectrum on $(0,\infty)$. 
	\end{itemize}
\end{example}

In the special case $g(x) = x^a$, more was already known, by an approach requiring $g$ to be infinitely differentiable with decay conditions on derivatives of all orders \cite{MR682723} (see also references therein).  Another example is the potential defined piecewise by
\begin{equation}\label{eqnPM1}
V(x) = (-1)^{2n\lfloor x - n\rfloor}, \qquad n-1 \le x < n, \quad n=1,2,3,\dots,
\end{equation}
 sometimes used as an example of a potential not decaying in a classical sense but having related properties \cite{EichingerLukic,LSW}. We obtain its spectral properties:

\begin{example}\label{xmplPM1}
The Schr\"odinger operator with potential given by \eqref{eqnPM1} 
 has a.c.\ spectrum on $[0,\infty)$ and the singular part of its spectral measure is zero-dimensional.
\end{example}

\section{Observations about the decomposition of the potential}

A technicality of the $H^{-1}$ setup is that certain claims about the Schr\"odinger operators ostensibly depend on the choice of decomposition. We explain that WKB asymptotic behavior is only affected by a asymptotically constant phase shift, which can be factored out:

\begin{lemma}\label{lemmaWKBgaugechange}
If $V$ is $H^{-1}$-decaying and
\[
V = \sigma_k' + \tau_k, \qquad k = 1,2,
\]
are two distinct decompositions with the decay condition
\[
\int_j^{j+1} \lvert \sigma_k(x) \rvert^2 \,dx + \int_j^{j+1} \lvert \tau_k(x) \rvert \,dx  \to 0, \qquad j \to \infty,
\]
 then
\[
L = \lim_{x \to \infty} \int_0^x (\tau_1(t) - \tau_2(t))\,dt
\]
is convergent. In particular, if $u$ satisfies WKB asymptotic behavior with respect to $\sigma_1, \tau_1$, then $e^{iL/(2k)} u$ satisfies WKB asymptotic behavior with respect to $\sigma_2, \tau_2$.
\end{lemma}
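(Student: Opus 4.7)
The plan is to set $\rho := \sigma_1 - \sigma_2$. Since $V = \sigma_1' + \tau_1 = \sigma_2' + \tau_2$ as distributions on $\bbR_+$, we have $\rho' = \tau_2 - \tau_1 \in L^1_\loc(\bbR_+)$, so $\rho$ admits an $\AC_\loc$ representative with a right limit $\rho(0^+)$. Writing
\[
\int_0^x (\tau_1 - \tau_2)(t)\,dt = -\int_0^x \rho'(t)\,dt = -\bigl(\rho(x) - \rho(0^+)\bigr),
\]
the convergence of $L$ is equivalent to the existence of $\lim_{x\to\infty} \rho(x)$, which under the decay hypothesis must then equal zero.

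The core estimate is that unit-interval decay of $\rho$ in $L^2$ together with unit-interval decay of $\rho'$ in $L^1$ forces $\rho$ to decay uniformly. By hypothesis and the triangle inequality, $a_j := \int_j^{j+1} |\rho|^2\,dx \to 0$ and $b_j := \int_j^{j+1} |\rho'|\,dx \to 0$. Cauchy--Schwarz gives $\int_j^{j+1} |\rho|\,dx \le a_j^{1/2}$, so by continuity some $x_j \in [j,j+1]$ satisfies $|\rho(x_j)| \le a_j^{1/2}$; the fundamental theorem of calculus then gives $|\rho(x) - \rho(x_j)| \le b_j$ for every $x \in [j,j+1]$. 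Hence $\sup_{x \in [j,j+1]} |\rho(x)| \le a_j^{1/2} + b_j \to 0$, so $\rho(x) \to 0$ and $L = \rho(0^+)$ is finite.

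For the WKB transfer, the first part yields $\int_0^x \tau_1\,dt - \int_0^x \tau_2\,dt \to L$. Since $\tau_2$ is real, $\lvert e^{ikx - \frac{i}{2k}\int_0^x \tau_2(t)\,dt}\rvert = 1$, and therefore
\[
e^{iL/(2k)}\, e^{ikx - \frac{i}{2k}\int_0^x \tau_1(t)\,dt} = e^{ikx - \frac{i}{2k}\int_0^x \tau_2(t)\,dt} + o(1).
\]
Multiplying \eqref{WKB} (for $\tau_1$) by $e^{iL/(2k)}$ thus recovers \eqref{WKB} for $\tau_2$. For the quasiderivative, $u^{[1]}_k := u' - \sigma_k u$ gives $u^{[1]}_2 = u^{[1]}_1 + \rho u$; since $u$ is bounded by \eqref{WKB} and $\rho(x) \to 0$, the correction term $\rho u$ is $o(1)$, and the same phase factor transforms \eqref{WKBquasiderivative} for $(\sigma_1,\tau_1)$ into \eqref{WKBquasiderivative} for $(\sigma_2,\tau_2)$. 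The only mild technical subtlety is that $\sigma_k$ are only $L^2_\loc$ and have no individual pointwise meaning; this is side-stepped by working throughout with the AC function $\rho$ and never evaluating $\sigma_1$ or $\sigma_2$ separately.
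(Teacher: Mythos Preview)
Your proof is correct and follows essentially the same route as the paper: set the difference of the $\sigma$'s, observe it is $\AC_\loc$ with derivative $\tau_2-\tau_1$, and deduce uniform decay from the unit-interval $L^2$ and $L^1$ decay. The paper invokes ``a Sobolev inequality'' at that step, whereas you spell out the mean-value-plus-fundamental-theorem argument; you also treat the quasiderivative condition \eqref{WKBquasiderivative} explicitly via $u^{[1]}_2 = u^{[1]}_1 + \rho u$, which the paper leaves implicit.
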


\begin{proof}
Due to $(\sigma_2 - \sigma_1)' = \tau_1 - \tau_2$, the difference $\theta = \sigma_2 - \sigma_1$ is locally absolutely continuous. Since $\int_j^{j+1} \lvert \theta(x) \rvert^2\,dx \to 0$ and $\int_j^{j+1} \lvert \theta'(x) \rvert \,dx \to 0$, by a Sobolev inequality, $\theta(x) \to 0$ as $x \to\infty$. Convergence of the limit follows from $\int_0^x (\tau_1(t) - \tau_2(t))\,dt = \theta(x) - \theta(0)$.  Thus,
\[
e^{\frac{i}{2k} \int_0^x ( \tau_1(t) - \tau_2(t)) \,dt} = e^{\frac{iL}{2k}} + o(1), \qquad x \to\infty,
\]
and multiplying by WKB asymptotics for $u$ gives the final claim.
\end{proof}

\begin{lemma}\label{lpdomains}
	If $f\in \ell^p(L^2)$, then $f, f^2 \in \ell^p(L^1)$.
\end{lemma}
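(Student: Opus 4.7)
The plan is to apply Hölder's inequality at two levels: first on each unit interval to pass from $L^2$ to $L^1$ norms, and then on the sequence of local norms to handle the $\ell^p$ summation.

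For the claim $f \in \ell^p(L^1)$, I would apply the Cauchy--Schwarz inequality on each interval $(n,n+1)$, which has length $1$, to obtain
\[
\lVert f \chi_{(n,n+1)} \rVert_1 \le \lVert f \chi_{(n,n+1)} \rVert_2 \cdot \lVert \chi_{(n,n+1)} \rVert_2 = \lVert f \chi_{(n,n+1)} \rVert_2.
\]
Raising to the $p$th power and summing over $n$ gives the result directly from the assumption $f\in \ell^p(L^2)$.

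For the claim $f^2 \in \ell^p(L^1)$, I would use the pointwise identity $\lVert f^2 \chi_{(n,n+1)} \rVert_1 = \lVert f \chi_{(n,n+1)} \rVert_2^2$. Setting $a_n = \lVert f\chi_{(n,n+1)} \rVert_2$, the hypothesis says $(a_n) \in \ell^p(\N)$ and the goal is $(a_n^2) \in \ell^p(\N)$, i.e., $(a_n) \in \ell^{2p}(\N)$. Since $p \le 2p$, this is the standard embedding of $\ell^p$ into $\ell^{2p}$ for sequences, which follows from the observation that $(a_n) \in \ell^p$ forces $a_n \to 0$ and in particular $M := \sup_n a_n < \infty$, so that $a_n^{2p} \le M^p \, a_n^p$ and summability is preserved.

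No real obstacle arises; both claims reduce to two one-line applications of Hölder, with the only minor subtlety being the use of boundedness of $(a_n)$ (deduced from $\ell^p$-summability) to upgrade from $\ell^p$ to $\ell^{2p}$ at the sequence level.
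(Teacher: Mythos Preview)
Your proof is correct and follows essentially the same approach as the paper: Cauchy--Schwarz on each unit interval for the first claim, and the embedding $\ell^p \subset \ell^{2p}$ combined with the identity $\lVert f^2\chi_{(n,n+1)}\rVert_1 = \lVert f\chi_{(n,n+1)}\rVert_2^2$ for the second. The paper invokes the $\ell^p \subset \ell^{2p}$ inclusion as well-known, while you spell it out via boundedness of the sequence; otherwise the arguments are identical.
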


\begin{proof}
For any $j$, by the Cauchy--Schwarz inequality,
\[
\int_j^{j+1} |f(x)|\,\dd x \lqs \left(\int_j^{j+1}|f(x)|^2\,\dd x\right)^{1/2}\left(\int_j^{j+1}1\,\dd x\right)^{1/2} = \left(\int_j^{j+1}|f(x)|^2\,\dd x\right)^{1/2}.
\]
Taking $p$-th powers and summing in $j$ proves $f \in \ell^p(L^1)$.

By the well-known inclusion $\ell^p \subset \ell^q$ for $q > p$, $f \in \ell^p(L^2)$ implies $f \in \ell^{2p}(L^2)$. Note that $f \in \ell^{2p}(L^2)$  if and only if $f^2 \in \ell^p(L^1)$, since they both correspond to the convergence condition
\[
\sum_j \left( \int_j^{j+1} \lvert f(x) \rvert^2 \,dx \right)^p = \sum_j \left( \sqrt{ \int_j^{j+1} \lvert f(x) \rvert^2 \,dx } \right)^{2p} < \infty. \qedhere
\]
\end{proof}

\begin{lemma}\label{weightedEllpH-1space}
	Let $p \ge 1$, $\gamma \ge 0$, and $(1+x)^{\gamma}V\in \ell^p(H^{-1})$. Then $V$ has a decomposition $V=\sigma'+\tau$ such that
	\[
	(1+x)^{\gamma}\sigma\in \ell^p(L^2),\qquad (1+x)^{\gamma}\tau\in \ell^p(L^1).
	\]
	Moreover, for this decomposition, $(1+x)^\gamma (\tau - \sigma^2) \in \ell^p(L^1)$.
\end{lemma}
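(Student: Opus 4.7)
The plan is to start from a decomposition of the weighted distribution $(1+x)^\gamma V$ and transport the weight onto $V$ by multiplying through by $(1+x)^{-\gamma}$, using the distributional product rule to handle the derivative term. Concretely, by definition of $\ell^p(H^{-1})$, the hypothesis $(1+x)^\gamma V \in \ell^p(H^{-1})$ yields functions $\tilde\sigma \in \ell^p(L^2)$ and $\tilde\tau \in \ell^p(L^1)$ with $(1+x)^\gamma V = \tilde\sigma' + \tilde\tau$. Multiplying by the smooth function $(1+x)^{-\gamma}$ and applying the product rule $\bigl((1+x)^{-\gamma}\tilde\sigma\bigr)' = (1+x)^{-\gamma}\tilde\sigma' - \gamma(1+x)^{-\gamma-1}\tilde\sigma$, I would set
\[
\sigma := (1+x)^{-\gamma}\tilde\sigma, \qquad \tau := \gamma(1+x)^{-\gamma-1}\tilde\sigma + (1+x)^{-\gamma}\tilde\tau,
\]
which gives $V = \sigma' + \tau$.

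Next I would verify the weighted decay of these functions. For $\sigma$, the identity $(1+x)^\gamma \sigma = \tilde\sigma$ immediately gives $(1+x)^\gamma\sigma \in \ell^p(L^2)$. For $\tau$, I compute $(1+x)^\gamma \tau = \gamma(1+x)^{-1}\tilde\sigma + \tilde\tau$; the second summand lies in $\ell^p(L^1)$ by assumption, and for the first summand I would apply Lemma~\ref{lpdomains} to get $\tilde\sigma \in \ell^p(L^1)$ and then use that $(1+x)^{-1}$ is bounded to conclude $(1+x)^{-1}\tilde\sigma \in \ell^p(L^1)$.

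For the moreover clause, the key is to estimate $(1+x)^\gamma\sigma^2 = (1+x)^{-\gamma}\tilde\sigma^2$. Applying Lemma~\ref{lpdomains} again gives $\tilde\sigma^2 \in \ell^p(L^1)$, and multiplying by the bounded factor $(1+x)^{-\gamma}$ preserves membership in $\ell^p(L^1)$. Combining this with the previous paragraph and using the triangle inequality in $\ell^p(L^1)$ yields $(1+x)^\gamma(\tau - \sigma^2) \in \ell^p(L^1)$.

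I do not anticipate a serious obstacle here; the argument is essentially algebraic manipulation combined with Lemma~\ref{lpdomains}. The only mild subtlety is justifying the product rule at the distributional level, but since $(1+x)^{-\gamma}$ is smooth on $\bbR_+$ this is standard, and since $\gamma \ge 0$ the weights $(1+x)^{-\gamma}$ and $(1+x)^{-\gamma-1}$ are bounded on $\bbR_+$, which is what makes the weight-transfer estimates trivial.
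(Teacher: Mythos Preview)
Your proposal is correct and follows essentially the same approach as the paper: start from a decomposition $(1+x)^\gamma V = \tilde\sigma' + \tilde\tau$, set $\sigma = (1+x)^{-\gamma}\tilde\sigma$, use the product rule to identify $\tau$, and then invoke Lemma~\ref{lpdomains} together with the boundedness of negative powers of $(1+x)$ to handle the remaining terms. The only cosmetic difference is in the last step, where the paper phrases the estimate as $(1+x)^\gamma\sigma^2 \le (1+x)^{2\gamma}\sigma^2$ rather than your equivalent $(1+x)^\gamma\sigma^2 = (1+x)^{-\gamma}\tilde\sigma^2$.
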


\begin{proof}
	By definition, there exist $a\in \ell^p(L^2)$ and $b\in \ell^p(L^1)$ such that
	\[
	(1+x)^{\gamma}V = a' + b\quad\implies\quad V = (1+x)^{-\gamma}\big(a' + b\big).
	\]
	Let $\sigma(x) = (1+x)^{-\gamma}a$, then $(1+x)^{\gamma}\sigma = a\in \ell^p(L^2)$; moreover,
	\[
	\sigma' = (1+x)^{-\gamma}a' - \gamma(1+x)^{-\gamma-1}a,
	\]
	and 
	\[
	V = \sigma' + \tau,\qquad \tau(x):= (1+x)^{-\gamma}\left(\frac{\gamma}{1+x}a(x) + b(x) \right).
	\]
	By Lemma~\ref{lpdomains},  $a \in \ell^p(L^2)$ implies $a \in \ell^p(L^1)$, and by the pointwise estimate
	\[
	\left\lvert \frac{a(x)}{1+x} \right\rvert \le \lvert a(x) \rvert,
	\]
this implies $(1+x)^{-1} a \in \ell^p(L^1)$. Moreover, $b\in \ell^p(L^1)$, so $(1+x)^\gamma \tau \in \ell^p(L^1)$.

	Applying Lemma \ref{lpdomains} to $(1+x)^{\gamma}\sigma$ implies 
	\[ (1+x)^{\gamma}\sigma\in \ell^p(L^1), \qquad (1+x)^{2\gamma}\sigma^2\in \ell^p(L^1).\] 
	Then a pointwise estimate $(1+x)^{\gamma}\sigma^2 \le (1+x)^{2\gamma}\sigma^2$ implies  $(1+x)^{\gamma}\sigma^2\in \ell^p(L^1)$.
\end{proof}

\section{A pointwise condition for WKB asymptotic behavior}

We provide a  condition for the existence of a solution with WKB asymptotic behavior at a fixed energy $E$. The eigensolution equation $\ell u = Eu$ can be written as a first-order matrix ODE with $L^1_\loc$ coefficients,
\begin{equation}\label{eigenfunctionMatrix}
\begin{pmatrix}
u^{[1]} \\
u
\end{pmatrix}'
 =  \begin{pmatrix}
-  \sigma & \tau - \sigma^2 - E  \\
1 &  \sigma
\end{pmatrix} 
\begin{pmatrix}
u^{[1]} \\
u
\end{pmatrix},
\end{equation}
and the proof consists of transforming this ODE into another one.

\begin{theorem}\label{equivSolutions}
Fix $\sigma, \tau$ and fix $E > 0$. Denote $k= \sqrt E$ and
\begin{equation}\label{funcDefs}
	\begin{split}
		Q(x) & = \tau(x) - \sigma(x)^2,\qquad \tilde Q(x,E) = Q(x) + 2 i k \sigma(x)  \\
		h(x,E) & = 2 kx - \int_0^x \frac{Q(t)}{k} \,dt,\qquad w(x,E) = - \frac i{2k} \\
		&\quad \cF(x,E)= w(x,E) e^{-ih(x,E)} \tilde Q(x,E).
	\end{split}
\end{equation}
If the system
\begin{equation}\label{systemInY}
Y'(x) = D(x,E)
Y(x), \qquad D(x,E)=\begin{pmatrix}
0 & \cF(x,E) \\
\ol{\cF(x,E)} & 0
\end{pmatrix}
\end{equation}
has a solution obeying
\begin{equation}\label{WKBrestated2}
Y(x) = \begin{pmatrix}
1 \\
0
\end{pmatrix} + o(1), \qquad x \to \infty
\end{equation}
then there is an eigensolution $u$ obeying the asymptotic behavior
\begin{equation}\label{WKB2}
u(x) = e^{ik x - \frac{i}{2k} \int_0^x ( \tau - \sigma^2)(t)\,dt}  + o(1), \qquad x \to +\infty,
\end{equation}
\begin{equation}\label{WKBquasiderivative2}
u^{[1]}(x) = ik e^{ik x - \frac{i}{2k} \int_0^x ( \tau - \sigma^2)(t)\,dt}  + o(1), \qquad x \to +\infty.
\end{equation}
In particular, if $\sigma \in \ell^2(L^2) = L^2((0,\infty))$, then there is an eigensolution obeying the WKB asymptotic behavior  \eqref{WKB}, \eqref{WKBquasiderivative}.
\end{theorem}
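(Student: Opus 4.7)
My plan is to produce the eigensolution by an explicit change of variables in the first-order matrix system \eqref{eigenfunctionMatrix}. Given a solution $(Y_1,Y_2)$ of \eqref{systemInY}, the natural ansatz is
\[
\begin{pmatrix} u^{[1]} \\ u \end{pmatrix} = M(x,E)\begin{pmatrix} Y_1 \\ Y_2 \end{pmatrix}, \qquad M(x,E) = \begin{pmatrix} ik\,e^{ih(x,E)/2} & -ik\,e^{-ih(x,E)/2} \\ e^{ih(x,E)/2} & e^{-ih(x,E)/2} \end{pmatrix},
\]
i.e.\ $u(x) = e^{ih(x,E)/2}Y_1(x) + e^{-ih(x,E)/2}Y_2(x)$. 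The columns of $M$ are the eigenvectors $(\pm ik,1)^T$ of the constant-coefficient free system, modulated by the WKB phase $h/2$; since $\det M = 2ik \ne 0$, this is a genuine and everywhere invertible change of variables, so that $u$ defined in this way satisfies $\ell u = Eu$ whenever $(Y_1,Y_2)$ satisfies the $Y$-system derived below.

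Next I would carry out the substitution. Using $h'/2 = k - Q/(2k)$, the identity $u' = u^{[1]} + \sigma u$ (the second row of \eqref{eigenfunctionMatrix}) and the first row each yield a scalar identity linear in $Y_1'$ and $Y_2'$. Adding and subtracting these two identities decouples the derivatives, and after cancellation of the leading $\pm ik$ terms one obtains
\[
Y_1' = \left(\sigma - \frac{iQ}{2k}\right)e^{-ih}\,Y_2, \qquad Y_2' = \left(\sigma + \frac{iQ}{2k}\right)e^{ih}\,Y_1.
\]
The first prefactor equals $-\frac{i}{2k}(Q + 2ik\sigma) = -\frac{i}{2k}\tilde Q(x,E)$, so the coefficient is exactly $\cF(x,E)$; since $k$ and $h$ are real for $E>0$, the second prefactor is its complex conjugate and the coefficient is $\overline{\cF(x,E)}$. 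This reproduces \eqref{systemInY}. I expect this algebraic reduction to be the main obstacle: one has to see how the $\sigma$-contributions coming from the quasiderivative and the $Q$-contributions coming from the WKB phase combine into the single complex-valued effective potential $\tilde Q$, and verify that the nonlinear $\sigma^2$ piece inside $Q$ is precisely what is needed to produce the clean form of \eqref{systemInY}.

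The asymptotics are then immediate. Since $h$ is real-valued, $\lvert e^{\pm ih/2}\rvert = 1$, so $Y(x)\to(1,0)^T$ yields $u(x) = e^{ih(x,E)/2} + o(1)$ and $u^{[1]}(x) = ik\,e^{ih(x,E)/2} + o(1)$, which are exactly \eqref{WKB2} and \eqref{WKBquasiderivative2}. For the concluding claim, if $\sigma \in L^2((0,\infty))$, then $\int_0^x \sigma(t)^2\,dt$ converges to a finite limit $L$, so
\[
e^{-\frac{i}{2k}\int_0^x Q(t)\,dt} = e^{\frac{i}{2k}\int_0^x \sigma(t)^2\,dt}\,e^{-\frac{i}{2k}\int_0^x \tau(t)\,dt} = e^{iL/(2k)}\,e^{-\frac{i}{2k}\int_0^x \tau(t)\,dt} + o(1),
\]
and therefore $e^{-iL/(2k)}u$ is a (scalar multiple of an) eigensolution satisfying the WKB asymptotic behavior \eqref{WKB}, \eqref{WKBquasiderivative}.
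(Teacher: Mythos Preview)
Your proposal is correct and follows essentially the same approach as the paper: the paper factors your matrix $M$ as the product $\begin{pmatrix} ik & -ik \\ 1 & 1 \end{pmatrix}\begin{pmatrix} e^{ih/2} & 0 \\ 0 & e^{-ih/2} \end{pmatrix}$ and performs the change of variables in two steps, but the resulting computation, the identification of $\cF$ and $\overline{\cF}$, and the asymptotic argument (including the $\sigma\in L^2$ reduction) are the same as yours.
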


\begin{proof}
With the substitution
\[
u_2 = \begin{pmatrix}
e^{ih/2} & 0 \\
0 & e^{-ih/2}
\end{pmatrix} Y,
\]
we obtain $u_2$ which obeys the ODE
\begin{align*}
u_2' & = \begin{pmatrix}
ih'/2 & 0 \\
0 & -ih'/2
\end{pmatrix}
\begin{pmatrix}
e^{ih/2} & 0 \\
0 & e^{-ih/2}
\end{pmatrix} Y +  \begin{pmatrix}
e^{ih/2} & 0 \\
0 & e^{-ih/2}
\end{pmatrix} \begin{pmatrix}
0 & \cF \\
\ol{\cF} & 0
\end{pmatrix}Y \\
& =
\begin{pmatrix}
ih'/2 & e^{ih} \cF \\
e^{-ih} \ol{\cF} & -ih'/2
\end{pmatrix} u_2
\end{align*}
and with the further substitution
\[
u_1 = \begin{pmatrix}
i k & -ik \\
1 & 1
\end{pmatrix} u_2
\]
where $k=\sqrt{E}$, this gives $u_1$ which obeys the ODE
\[
u_1' =
 \begin{pmatrix}
i k & -ik \\
1 & 1
\end{pmatrix} 
\begin{pmatrix}
ih'/2 & e^{ih} \cF \\
e^{-ih} \ol{\cF} & -ih'/2
\end{pmatrix}
 \begin{pmatrix}
i k & -ik \\
1 & 1
\end{pmatrix}^{-1} u_1
\]
By direct calculations, this gives
\begin{align*}
u_1'  & =  \begin{pmatrix}
- \frac 1{2k} \Im \tilde Q & - \frac{kh'}2 + \frac 12 \Re \tilde Q \\
\frac{h'}{2k} +  \frac 1{2k^2} \Re \tilde Q &  \frac 1{2k} \Im \tilde Q 
\end{pmatrix} 
u_1 \\
 & =  \begin{pmatrix}
-  \sigma & Q - k^2  \\
1 &  \sigma
\end{pmatrix} 
u_1 
\end{align*}
and we recognize this as the matrix ODE for eigenfunctions \eqref{eigenfunctionMatrix}.

Moreover, from the asymptotic behavior $Y = \binom 10 + o(1)$, since $\lvert e^{ih/2} \rvert = 1$ we obtain
\[
\left\lVert u_2 - \begin{pmatrix}
e^{ih/2} \\
0
\end{pmatrix}
\right\rVert
= \left\lVert Y - \begin{pmatrix}
1 \\
0
\end{pmatrix}
\right\rVert
\to 0, \qquad x \to \infty
\]
and then, since $(\begin{smallmatrix} ik & -ik \\ 1 & 1 \end{smallmatrix})$ is a fixed invertible matrix, we obtain
\[
\left\lVert
u_1 - \begin{pmatrix} 
ik e^{ih/2} \\
e^{ih/2}
\end{pmatrix}
\right\rVert 
\le 
\left\lVert  \begin{pmatrix}
i k & -ik \\
1 & 1
\end{pmatrix} 
\right\rVert
\left\lVert
u_2 - \begin{pmatrix} 
e^{ih/2} \\
e^{ih/2}
\end{pmatrix}
\right\rVert 
\to 0, \qquad x\to \infty
\]
and therefore
\[
u_1 = \begin{pmatrix} 
ik e^{ih/2} + o(1) \\
e^{ih/2} + o(1)
\end{pmatrix}, \qquad x \to\infty.
\]
This is equivalent to \eqref{WKB2}, \eqref{WKBquasiderivative2}. If $\sigma \in \ell^2(L^2)$, this can be transformed to  \eqref{WKB}, \eqref{WKBquasiderivative} as in the proof of Lemma~\ref{lemmaWKBgaugechange}.
\end{proof}

If $\sigma \notin \ell^2(H^{-1})$, the appearance of $\sigma^2$ in the exponents in \eqref{WKB2}, \eqref{WKBquasiderivative2} is of interest. Potentials decaying slower than $L^2$ may have empty a.c.\ spectrum \cite{KotaniUshiroya,MR1628290}, so no WKB-type behavior can be expected in general, but in some slowly decaying settings, such as Wigner--von Neumann type potentials with decay slower than $L^2$ \cite{JanasSimonov10,Lukic11,Lukic13,Lukic14,Simonov16,Gwaltney24}, precise asymptotics with additional correction terms are obtained, and those additional terms are of quadratic and higher orders in the potential.

At this point, the "short-range" case $V \in \ell^1(H^{-1})$ follows immediately (see also \cite{LSW}):

\begin{corollary} \label{corShortRangeCase}
If $V \in \ell^1(H^{-1})$, then for every $E > 0$, there exists an eigensolution with WKB asymptotic behavior.
\end{corollary}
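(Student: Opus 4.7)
The plan is to verify the hypotheses of Theorem~\ref{equivSolutions} and then solve the system \eqref{systemInY} with the required condition at infinity by a standard contraction argument. First, I would fix a decomposition $V=\sigma'+\tau$ with $\sigma\in \ell^1(L^2)$ and $\tau\in \ell^1(L^1)$, as guaranteed by the definition of $\ell^1(H^{-1})$. Since $\ell^1(L^2)\subset \ell^2(L^2)=L^2((0,\infty))$, the final clause of Theorem~\ref{equivSolutions} applies, so it suffices to produce a solution $Y$ of \eqref{systemInY} with $Y(x)=\binom{1}{0}+o(1)$ as $x\to\infty$. By Lemma~\ref{lpdomains} applied with $p=1$, both $\sigma$ and $\sigma^2$ lie in $\ell^1(L^1)\subset L^1(\bbR_+)$, and of course $\tau\in L^1(\bbR_+)$ as well. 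From the definition of $\cF(x,E)$ in \eqref{funcDefs}, we obtain the pointwise bound
\[
|\cF(x,E)| \lqs \frac{1}{2k}\bigl(|\tau(x)|+\sigma(x)^2\bigr) + |\sigma(x)|,
\]
and hence $\cF(\cdot,E)\in L^1((0,\infty))$ for every $E>0$.

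Second, I would produce the desired $Y$ on a half-line $[R,\infty)$ by a fixed-point argument. On the Banach space $C_b([R,\infty);\bbC^2)$ with the sup norm, I consider the Volterra operator
\[
(TY)(x) = \binom{1}{0} - \int_x^\infty D(t,E)\, Y(t)\,dt.
\]
Since the matrix norm $\|D(\cdot,E)\|$ is controlled by $|\cF(\cdot,E)|\in L^1$, I choose $R$ large enough that $\int_R^\infty \|D(t,E)\|\,dt < 1$, which makes $T$ a strict contraction and yields a unique fixed point $Y_\infty$. By construction $Y_\infty$ solves \eqref{systemInY} on $[R,\infty)$, and the asymptotic identity $Y_\infty(x)=\binom{1}{0}+o(1)$ follows from dominated convergence applied to the tail integral $\int_x^\infty D(t,E) Y_\infty(t)\,dt$. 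I then extend $Y_\infty$ to all of $\bbR_+$ by solving the linear system \eqref{systemInY} backward on $[0,R]$, using standard existence theory for first-order linear systems with $L^1_{\loc}$ coefficients.

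Finally, feeding $Y_\infty$ into Theorem~\ref{equivSolutions} produces an eigensolution $u$ satisfying \eqref{WKB2},\eqref{WKBquasiderivative2}, and since $\sigma\in L^2((0,\infty))$ the last clause of that theorem promotes this to the genuine WKB asymptotic behavior \eqref{WKB},\eqref{WKBquasiderivative}. The only step with any subtlety is absorbing the $\sigma^2$ nonlinearity inside $\tilde Q$, and this is dispatched by the embedding $\ell^1(L^2)\hookrightarrow \ell^1(L^1)$ coming from Lemma~\ref{lpdomains}; I do not expect any genuine obstacle in the short-range regime, in sharp contrast with Theorems~\ref{thmWKB} and~\ref{thmWKBhausdorff}, whose proofs must handle the energy-dependent linear combinations in $\tilde Q(\cdot,E)$ by the full Christ--Kiselev machinery.
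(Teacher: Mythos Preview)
Your proposal is correct and follows essentially the same approach as the paper: both show $\cF(\cdot,E)\in L^1((0,\infty))$ from $\sigma,\sigma^2,\tau\in L^1$ (the paper packages this via Lemma~\ref{weightedEllpH-1space} with $\gamma=0$, you unpack it via the definition of $\ell^1(H^{-1})$ and Lemma~\ref{lpdomains}), and then solve the integral equation by a standard Volterra argument. The only cosmetic difference is that the paper invokes the Volterra series directly while you use a contraction on $[R,\infty)$ and extend backward; these are interchangeable in the $L^1$ setting.
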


\begin{proof}
By Lemma~\ref{weightedEllpH-1space}, there exists a decomposition such that $\sigma, \tau - \sigma^2 \in \ell^1(L^1) = L^1((0,\infty))$ so $\cF(x,E) \in L^1((0,\infty))$. Thus, for any $E> 0$, there is a solution $Y$ of \eqref{systemInY}, \eqref{WKBrestated2} given by a classical Volterra series.
\end{proof}

WKB asymptotic behavior provides further information about the eigensolutions: 

\begin{lemma}\label{oneWKBimpliesALL}
If at some $E>0$ there is an eigensolution $u$ which obeys \eqref{WKB}, \eqref{WKBquasiderivative}, then $u$ and $\ol u$ are linearly independent, and all eigensolutions at $E$ are bounded.
\end{lemma}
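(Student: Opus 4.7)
The plan is to use a Wronskian-based argument: show $\bar u$ is also an eigensolution, compute the Wronskian $W(u,\bar u)$ from the WKB asymptotics to see it is nonzero, and then express an arbitrary eigensolution in the basis $\{u,\bar u\}$ to read off boundedness. Since $\sigma, \tau$ are real-valued and $E$ is real, conjugating the equation $\ell u = Eu$ and using the identity $(\bar u)^{[1]} = \overline{u^{[1]}}$ shows at once that $\bar u$ is an eigensolution at $E$ obeying the conjugate WKB asymptotics.

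Next, I would introduce the Wronskian $W(u,v) := u\, v^{[1]} - u^{[1]}\, v$ and verify that $W(u,v)' = 0$ whenever $\ell u = Eu$ and $\ell v = Ev$. This is a short algebraic check using the two identities $u' = u^{[1]} + \sigma u$ and $(u^{[1]})' = -\sigma u^{[1]} + (\tau - \sigma^2 - E)u$, where the $\sigma u\, v^{[1]}$ and $\sigma u^{[1]} v$ cross-terms cancel and the $(\tau - \sigma^2 - E) uv$ terms cancel symmetrically. Hence $W(u,\bar u)$ is constant on $\bbR_+$. Evaluating this constant from the WKB form \eqref{WKB}--\eqref{WKBquasiderivative}, with $\phi(x) = kx - \tfrac{1}{2k}\int_0^x \tau(t)\,dt \in \bbR$, gives
\[
W(u,\bar u) = \bigl(e^{i\phi}+o(1)\bigr)\bigl(-ik e^{-i\phi}+o(1)\bigr) - \bigl(ik e^{i\phi}+o(1)\bigr)\bigl(e^{-i\phi}+o(1)\bigr) = -2ik + o(1),
\]
so $W(u,\bar u) \equiv -2ik \neq 0$. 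In particular, $u$ and $\bar u$ are linearly independent.

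Finally, the solution space of $\ell v = Ev$ is two-dimensional (standard ODE theory for the first-order system \eqref{eigenfunctionMatrix} with $L^1_\loc$ coefficients), so every eigensolution $v$ can be written as $v = \alpha u + \beta \bar u$ with $\alpha,\beta \in \bbC$. From \eqref{WKB}, $|u(x)| = 1 + o(1)$ and $|\bar u(x)| = 1 + o(1)$ as $x \to +\infty$, so $v$ is bounded near $+\infty$; combined with continuity of $v$ on compact subintervals of $\bbR_+$, this gives boundedness of $v$ on $\bbR_+$. The only mildly delicate step is the Wronskian invariance, which must be done in the quasiderivative formalism rather than with the usual $uv' - u'v$; beyond that the argument is routine.
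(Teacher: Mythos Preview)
Your proof is correct and follows essentially the same Wronskian argument as the paper: constancy of $W$ in the quasiderivative sense, evaluation via the WKB asymptotics to get $\pm 2ik \neq 0$, and then boundedness of any linear combination of $u,\bar u$. You are simply more explicit than the paper in checking that $\bar u$ is an eigensolution, that $W'=0$, and that the solution space is two-dimensional, all of which the paper leaves implicit.
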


\begin{proof}
The Wronskian of two eigensolutions $u,v$ is
\[
W(u,v) (x) = u(x) v^{[1]}(x) - u^{[1]}(x) v(x).
\]
The Wronskian of $u, \ol u$ is independent of $x$, and from \eqref{WKB}, \eqref{WKBquasiderivative}, it follows that
\[
W(\ol u, u)(x) = 2ik  +o (1), \qquad x \to \infty
\] 
so $W(\ol u, u) = 2ik$. In particular, $\ol u, u$ are linearly independent.

Any eigensolution at energy $E$ is a linear combination of $\ol u, u$, so it is bounded.
\end{proof}

Transfer matrices $T(x,z)$ are obtained as the matrix solution of the initial value problem
\[
\partial_x T(x,z) = \begin{pmatrix}
-\sigma(x) & \tau(x) - \sigma(x)^2 - z \\
1 & \sigma(x)
\end{pmatrix}
T(x,z), \qquad T(0,z) = I = \begin{pmatrix} 1 & 0 \\ 0 & 1 \end{pmatrix}
\]
derived from \eqref{eigenfunctionMatrix}. This is intended as the unique solution which is locally absolutely continuous in $x$ for every $z$.

A nontrivial eigensolution $u$ is called subordinate if for every eigensolution $v$ linearly independent to $u$,
\[
\lim_{x\to\infty} \frac{ \int_0^x \lvert u(t) \rvert^2 \,dt } { \int_0^x \lvert v(t) \rvert^2 \,dt }  = 0.
\]
Next, we note the very general statement that boundedness of solutions implies absence of subordinate solutions and absolute continuity of the spectral measure. In the classical setting, this is a combination of results of Stolz \cite{Stolz92} with the subordinacy theory of Gilbert--Pearson \cite{MR915965}; in the $H^{-1}_\loc$ setting, this follows from the arguments of \cite{LSW}, and we provide the steps of the proof not explicitly stated there:

\begin{lemma}\label{lemmaSubordinate}
Assume $\sigma \in \ell^\infty(L^2)$, $\tau \in \ell^\infty(L^1)$. If all eigensolutions are bounded at some energy $E\in \bbR$, then there are no subordinate solutions at energy $E$.

In particular, if eigensolutions are bounded for all $E$ in a Borel set $S$ and $\mu$ denotes the canonical spectral measure of $H$, then $\chi_S \,d\mu$ is mutually absolutely continuous with $\chi_S \,dm$, where $m$ is Lebesgue measure.
\end{lemma}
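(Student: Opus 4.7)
The plan is a two-stage reduction: first, from boundedness of all eigensolutions at $E$ to the absence of subordinate solutions at $E$; second, from the latter to the claimed mutual absolute continuity via subordinacy theory.

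For the first stage, I would start by noting that the columns of the transfer matrix $T(\cdot,E)$ are eigensolutions with canonical initial data, so boundedness of all eigensolutions at $E$ gives $\sup_x \|T(x,E)\| < \infty$. The coefficient matrix in \eqref{eigenfunctionMatrix} has zero trace, so $\det T(x,E) \equiv 1$, and hence $\|T(x,E)^{-1}\|$ is also uniformly bounded. Therefore $|u(x)|^2 + |u^{[1]}(x)|^2$ is bounded above and below by positive constants for every nontrivial eigensolution $u$ at $E$.

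Next, using the hypotheses $\sigma \in \ell^\infty(L^2)$, $\tau \in \ell^\infty(L^1)$, a Gronwall estimate for the matrix ODE \eqref{eigenfunctionMatrix} produces a uniform bound on the transfer matrix across any unit interval, so $|u(y)|^2 + |u^{[1]}(y)|^2$ is comparable to $|u(x)|^2 + |u^{[1]}(x)|^2$ for $y \in [x,x+1]$, and thus
\[
\int_n^{n+1} \bigl( |u|^2 + |u^{[1]}|^2 \bigr)\,dt \ge c > 0
\]
uniformly in $n$. To upgrade this to a lower bound on $\int |u|^2$, I would pass to modified Prüfer variables compatible with the quasiderivative, $u = R\sin\theta$, $u^{[1]}/k = R\cos\theta$ (for $E = k^2 > 0$), and check that $\theta' = k + g$ with $g$ a linear combination of $\sigma$ and $\tau/k$ that lies in $\ell^\infty(L^1)$. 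Strict monotonicity of $\theta$ on scales $\pi/k$ then allows averaging $R^2 \sin^2\theta$ over windows of length $\gtrsim 1/k$ to obtain $\int_0^N |u|^2\,dt \ge c' N$ for large $N$. Since every eigensolution $v$ at $E$ is bounded, $\int_0^N |v|^2\,dt \le C N$, so no eigensolution at $E$ can be subordinate.

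Finally, for the second stage, I would invoke the subordinacy theorem as developed for the locally $H^{-1}$ setting in \cite{LSW}: the set of energies admitting no subordinate solution is, up to a Lebesgue-null set, both an essential support of $\mu_\ac$ and disjoint from every minimal support of $\mu_{\mathrm{sing}}$. Applied to $S$, this gives $\chi_S \,d\mu_{\mathrm{sing}} = 0$, so $\chi_S \,d\mu = \chi_S \,d\mu_\ac$, which is mutually absolutely continuous with $\chi_S \,dm$. The principal technical hurdle will be the Prüfer step: because $\sigma$ and $\tau$ carry $H^{-1}$-type singularities, the identity $\theta' = k + g$ must be made rigorous in an integrated (locally absolutely continuous) sense, and one must verify that these corrections do not destroy the oscillation of $\theta$ on the unit scale.
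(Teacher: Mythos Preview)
Your approach diverges from the paper's in the first stage, and there is a genuine gap at the outset. When you write that boundedness of all eigensolutions gives $\sup_x\|T(x,E)\|<\infty$ because ``the columns of $T$ are eigensolutions,'' you are conflating the eigensolution $u$ with the vector $\binom{u^{[1]}}{u}$: the hypothesis bounds only $u$, not $u^{[1]}$, so the transfer matrix is not yet controlled. The paper fills exactly this step by invoking the eigensolution estimates \cite[Lemma~2.7]{LSW}, which use the uniform local hypotheses $\sigma\in\ell^\infty(L^2)$, $\tau\in\ell^\infty(L^1)$ to bound $u^{[1]}$ in terms of local data of $u$. Your Gronwall observation only gives comparability of $|u|^2+|u^{[1]}|^2$ across unit intervals, not a global bound on $u^{[1]}$ from a global bound on $u$ alone.

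Once the transfer matrix is bounded, the paper takes a much shorter route than your Pr\"ufer argument: it quotes from \cite[Proof of Theorem~1.3]{LSW} that existence of a subordinate solution forces $\tfrac1l\int_0^l\|T(x,E)\|^2\,dx\to\infty$, and simply reads off the contrapositive. Your direct Pr\"ufer strategy is plausible for $E>0$ (and that is the only case used downstream), but the claim of ``strict monotonicity of $\theta$ on scales $\pi/k$'' is too strong: the correction term $g$ works out to $\sigma\sin 2\theta-\tfrac{\tau-\sigma^2}{k}\sin^2\theta$, which lies only in $\ell^\infty(L^1)$, so $\theta$ need not be monotone pointwise. You only get $\theta(n{+}1)-\theta(n)=k+O(1)$, and extracting a uniform lower bound on $\int_n^{n+1}\sin^2\theta\,dx$ from this still requires an additional averaging argument. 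For the second stage, both approaches invoke subordinacy theory; the paper cites \cite{MR1761504,EichingerLukicSimanek} for the canonical-systems version and passes through the characterization of the set $N$ with no subordinate solution via boundary values of the Weyl $m$-function, which is equivalent in content to what you describe.
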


\begin{proof}
In \cite[Proof of Theorem 1.3]{LSW}, it was proved that existence of a subordinate solution implies
\[
\lim_{l \to \infty} \frac 1l \int_0^l \lVert T(x,E)\rVert^2\,dx = \infty.
\]
However, boundedness of eigensolutions implies boundedness of their quasiderivatives by the eigensolution estimates \cite[Lemma 2.7]{LSW}, so it implies $\sup_x \lVert T(x,E) \rVert < \infty$. Combining the two, we see that boundedness of eigensolutions implies that there is no subordinate solution.

By subordinacy theory (\cite{MR1761504,EichingerLukicSimanek} in this generality), the set $N \subset \bbR$ of energies at which there is no subordinate solution is, up to a set of measure zero, equal to the set of energies $E \in \bbR$ at which
\[
\lim_{\epsilon \downarrow 0} m(E+i\epsilon) \in \bbC_+.
\]
By the general properties of the Herglotz representation, it follows that $\chi_N \, d\mu$ is mutually absolutely continuous with $\chi_N \,dm$, with $m$ the Lebesgue measure.
\end{proof}

\section{Martingale structures and operator estimates}

Before we proceed to the proofs of Theorem \ref{thmWKB} and Theorem \ref{thmWKBhausdorff}, we need some preliminary notions and results. Let us start by introducing the martingale structure :

\begin{definition}
A collection of subintervals $\{E^m_j: m\in\bbZ_+, 1\lqs j \lqs 2^m\}$ is called a martingale structure on $\bbR_+$ if the following is true \cite{ChristKiselev, liu_multilinear}:
	\begin{itemize}
		\item $\forall m$, $\bbR_+ = \cup_j E^m_j$;
		\item $\forall i \neq j$, $E^m_i\cap E^m_j =\emptyset$;
		\item If $i<j$, $x\in E^m_i$ and $x'\in E^m_j$, then $x<x'$;
		\item $\forall m$, $E^m_j = E^{m+1}_{2j-1}\cup E^{m+1}_{2j}$.
	\end{itemize}
	Given a martingale structure $\{E^m_j\}$, let $\chi^m_j:= \chi_{E^m_j}$; the martingale structure is said to be adapted (in $\ell^p(L^1)$) to $f$ if for all $m,j$:
	\[
	\|f\chi^m_j\|^p_{\ell^p(L^1)} \lqs 2^{-m}\|f\|_{\ell^p(L^1)}^p.
	\]
\end{definition}

\begin{lemma}[{\cite[p.433]{ChristKiselev}}]
	For any function $f\in \ell^p(L^1)$, there exists a martingale structure $\{E^m_j\}$ adapted to $f$.
\end{lemma}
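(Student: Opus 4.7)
The plan is to construct the $E^m_j$ recursively by bisecting intervals. Starting from the base atom $E^0_1=\bbR_+$, I would maintain the inductive hypothesis that at level $m$ each atom $E^m_j$ is an interval in $\bbR_+$ with $\|f\chi^m_j\|^p_{\ell^p(L^1)}\lqs 2^{-m}\|f\|^p_{\ell^p(L^1)}$. To descend to level $m+1$, I would split each $E^m_j=[a,b)$ at a point $c\in[a,b]$, setting $E^{m+1}_{2j-1}=[a,c)$ and $E^{m+1}_{2j}=[c,b)$, so that both halves inherit the bound at the next level. The entire task reduces to producing such a splitting point $c$.

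To this end, consider
\[
L(c):=\|f\chi_{[a,c)}\|^p_{\ell^p(L^1)},\qquad c\in[a,b].
\]
This function is continuous and non-decreasing in $c$, since as $c$ varies only the integer subinterval containing $c$ contributes a variable term $\bigl(\int_{\lfloor c\rfloor}^{c}|f|\,\dd x\bigr)^p$ (restricted to $[a,b)$), and this term is continuous in $c$ by absolute continuity of the Lebesgue integral. The key additional ingredient is the pointwise inequality $\alpha^p+\beta^p\lqs(\alpha+\beta)^p$, valid for $p\gqs 1$ and $\alpha,\beta\gqs 0$; applied to the two pieces of the (at most one) integer subinterval straddling $c$, it yields the subadditivity
\[
\|f\chi_{[a,c)}\|^p_{\ell^p(L^1)}+\|f\chi_{[c,b)}\|^p_{\ell^p(L^1)}\lqs L(b)=\|f\chi^m_j\|^p_{\ell^p(L^1)},
\]
the remaining integer subintervals lying entirely in one half and contributing equally to both sides. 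Since $L(a)=0$ and $L$ is continuous up to $L(b)$, the intermediate value theorem produces $c\in[a,b]$ with $L(c)=\tfrac12 L(b)$; when $b=\infty$ one uses instead the monotone limit $L(c)\to\|f\chi^m_j\|^p_{\ell^p(L^1)}$ as $c\to\infty$. Subadditivity then forces
\[
\|f\chi_{[c,b)}\|^p_{\ell^p(L^1)}\lqs L(b)-L(c)=\tfrac12 L(b)\lqs 2^{-(m+1)}\|f\|^p_{\ell^p(L^1)},
\]
matching the bound for $[a,c)$ that holds by the choice of $c$.

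Iterating this bisection step produces the desired martingale structure; the ordered, disjoint, covering, and nested conditions are automatic from the construction. There is no real obstacle beyond minor bookkeeping at boundary cases (e.g.\ empty atoms are tolerated by the definition of a martingale structure, and arise only when $L$ first reaches $\tfrac12 L(b)$ at an endpoint of $[a,b]$). The hypothesis $p\gqs 1$ enters only through the pointwise subadditivity $\alpha^p+\beta^p\lqs(\alpha+\beta)^p$, which would reverse for $p\in(0,1)$ and force a different argument.
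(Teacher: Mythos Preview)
The paper does not supply its own proof of this lemma; it merely cites \cite[p.~433]{ChristKiselev}. Your argument is correct and is precisely the standard construction used there: recursive bisection, using continuity of $c\mapsto\|f\chi_{[a,c)}\|_{\ell^p(L^1)}^p$ together with the subadditivity $\|f\chi_{[a,c)}\|_{\ell^p(L^1)}^p+\|f\chi_{[c,b)}\|_{\ell^p(L^1)}^p\lqs\|f\chi_{[a,b)}\|_{\ell^p(L^1)}^p$ (which, as you note, follows from $\alpha^p+\beta^p\lqs(\alpha+\beta)^p$ applied to the single integer cell straddling $c$), and then the intermediate value theorem to locate the splitting point.
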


Next, we introduce the $\mathcal B_s$ semi-norm which will be an important object throughout the section.
\begin{definition}
	For $s>0$, let $\cB_s$ be the Banach space consisting of all $\bbC$-valued sequences $a=a(m,j)$, $m\in\bbZ_+$ and $1\lqs j\lqs 2^m$, for which
	\begin{equation}
		\|a\|_{\mathcal{B}_s} = \sum_{m\in\bbZ_+}m^s\left( \sum_{j=1}^{2^m} |a(m,j)|^2\right)^{1/2} <\infty.
	\end{equation}
\end{definition}

\begin{definition}
For $f\in L^1_{\loc}(\bbR_+)$ and a martingale structure $\{E^m_j\}$, define a sequence
\begin{equation}
	\left\{ \int_{E^m_j} f(x)\,\dd x \right\} = \left\{ \int_{\bbR_+} f(x)\chi^m_j(x)\,\dd x \right\}.
\end{equation}
By abusing the notation, we denote
\begin{equation}
	\|f\|_{\mathcal{B}_s} = \left\| \left\{ \int_{E^m_j} f(x)\,\dd x \right\} \right\|_{\mathcal{B}_s} = \sum_{m=1}^{\infty} m^s\left( \sum_{j=1}^{2^m}\left| \int_{E^m_j}f(x)\,\dd x \right|^2 \right)^{1/2}.
\end{equation}
\end{definition}

\begin{lemma}\label{B-seminorm}
	$\lVert \cdot \rVert_{\cB_s}$ is a semi-norm on the set
\[
\cB_s = \{ f \in L^1_{\loc}(\bbR_+) \mid \lVert f \rVert_{\cB_s} < \infty\}.
\]
\end{lemma}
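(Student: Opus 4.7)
The plan is to verify the three standard semi-norm axioms directly from the definition, using linearity of the integral and Minkowski's inequality on $\ell^2$. There is no real obstacle here; the lemma is essentially a routine verification, and the main point is to be careful that the defining quantity behaves well under addition so that $\cB_s$ is actually a vector space.

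First, non-negativity $\lVert f \rVert_{\cB_s} \ge 0$ is immediate, since the expression is a sum of products of non-negative reals $m^s$ and $\ell^2$-norms of sequences.

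Second, for positive homogeneity, linearity of the integral gives
\[
\int_{E^m_j} (\lambda f)(x)\,\dd x = \lambda \int_{E^m_j} f(x)\,\dd x,
\]
so each inner sum scales by $|\lambda|^2$, its square root by $|\lambda|$, and therefore $\lVert \lambda f \rVert_{\cB_s} = |\lambda| \lVert f \rVert_{\cB_s}$. In particular, $f \in \cB_s$ implies $\lambda f \in \cB_s$.

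Third, for the triangle inequality, I would write $a^{(m)}_j := \int_{E^m_j} f \,\dd x$ and $b^{(m)}_j := \int_{E^m_j} g \,\dd x$, so that $\int_{E^m_j}(f+g)\,\dd x = a^{(m)}_j + b^{(m)}_j$ by linearity. For each fixed $m$, Minkowski's inequality on $\ell^2(\{1,\dots,2^m\})$ yields
\[
\left( \sum_{j=1}^{2^m} |a^{(m)}_j + b^{(m)}_j|^2 \right)^{1/2} \le \left( \sum_{j=1}^{2^m} |a^{(m)}_j|^2 \right)^{1/2} + \left( \sum_{j=1}^{2^m} |b^{(m)}_j|^2 \right)^{1/2}.
\]
Multiplying by $m^s \ge 0$ and summing over $m \in \bbZ_+$ gives $\lVert f+g \rVert_{\cB_s} \le \lVert f \rVert_{\cB_s} + \lVert g \rVert_{\cB_s}$. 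In particular, $f,g \in \cB_s$ implies $f + g \in \cB_s$, so $\cB_s$ is closed under addition; combined with closure under scalar multiplication, $\cB_s$ is a vector space, and $\lVert \cdot \rVert_{\cB_s}$ is a semi-norm on it.
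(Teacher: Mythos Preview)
Your proof is correct and is exactly the verification the paper has in mind; the paper's own proof consists of the single sentence ``This follows from the Minkowski's inequality,'' and your argument simply unpacks that reference by checking the three semi-norm axioms and applying Minkowski on $\ell^2$ for each fixed $m$.
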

\begin{proof}
	This follows from the Minkowski's inequality.
\end{proof}

Note that $\|f\|_{\cB_s}$ always assumes some underlying martingale structure $\{E^m_j\}$, though it is not necessarily adapted to $f$.

\begin{definition}
	Let $P_i:\ell^p(L^1)\to L^q(J,\dd E)$, $i=1,2$ be linear or sub-linear bounded operators, where $J\subset \bbR$ is a closed interval. For $s>0$ and a martingale structure $\{E^m_j\}$, define
\[
G^{(s)}_{P_1(f)}(E) = \|\{P_1(f\chi^m_j)(E)\}\}\|_{\mathcal{B}_s} = \sum_{m=1}^{\infty} m^s\left( \sum_{j=1}^{2^m}\left| P_1(f\chi^m_j)(E) \right|^2 \right)^{1/2},
\]
and
\begin{equation}
	\begin{split}
		G_{P_1(f),P_2(f)}^{(s)}(E) & = \left\| \left\{ P_1(f_1\chi^m_j)(E) + P_2(f_2\chi^m_j)(E)|^2 \right\} \right\|_{\mathcal{B}_s} \\
	& = \sum_{m=1}^{\infty} m^s\left( \sum_{j=1}^{2^m}|P_1(f_1\chi^m_j)(E) + P_2(f_2\chi^m_j)(E)|^2 \right)^{1/2}.
	\end{split}
\end{equation}
\end{definition}

Upon assuming boundedness of $P_i$, $i=1,2$, it can be shown that $G^{(s)}_{P_1(f)}$ and $G^{(s)}_{P_1(f),P_2(f)}$ are all in $L^q(J,\dd E)$:

\begin{lemma}[{\cite[Proposition 3.3]{ChristKiselev}}]\label{ck-JFA-proposition3.3}
	Assume that $P_i$, $i=1,2$ are bounded linear or sublinear operators from $\ell^p(L^1)$ to $L^q(J,\dd E)$, where $p<2<q$ and $J\subset \bbR$ is a closed interval. Then, 
	\begin{itemize}
		\item[(a)] for any $f\in \ell^p(L^1)$ and any martingale structure $\{E^m_j\}$ adapted to $f$,
			\[
			\|G^{(s)}_{P(f)}(E)\|_{L^q(J)}\lqs C(p,q,s,\|P\|)\cdot\|f\|_{\ell^p(L^1)};
			\]
		\item[(b)] for any $f_i\in \ell^p(L^1)$, $i=1,2$, and any martingale structure $\{E^m_j\}$ adapted to $|f_1|+|f_2|$, 
			\[ 
			\|G^{(s)}_{P_1(f_1), P_2(f_2)}(E)\|_{L^q(J)}\lqs C(p,q,s,\|P_1\|,\|P_2\|)\cdot(\|f_1\|_{\ell^p(L^1)} + \|f_2\|_{\ell^p(L^1)}),
			\]
	\end{itemize}
	where $C<\infty$ depends only on $p$, $q$ and the operator norm of $P$, or of $P_1$, $P_2$ respectively.
\end{lemma}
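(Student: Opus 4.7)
The plan is to reduce the statement to a single-level estimate at level $m$ of the martingale structure, from which the full $\cB_s$-norm bound follows by summing a geometric series that dominates the polynomial weight $m^s$. Throughout, write $\|P\|$ for the operator norm of $P$ as a map from $\ell^p(L^1)$ to $L^q(J,\dd E)$.

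For part (a), I would first apply Minkowski's integral inequality (valid because $q/2\gqs 1$) to swap the outer $L^q$-norm with the $\ell^2$ sum in $j$, then invoke the operator bound:
\[
\left\lVert \left(\sum_{j=1}^{2^m} |P(f\chi^m_j)|^2\right)^{1/2} \right\rVert_{L^q(J)} \lqs \left(\sum_{j=1}^{2^m} \|P(f\chi^m_j)\|_{L^q(J)}^2\right)^{1/2} \lqs \|P\|\left(\sum_{j=1}^{2^m}\|f\chi^m_j\|_{\ell^p(L^1)}^2\right)^{1/2}.
\]
Next, I would interpolate the $\ell^2$ sum on the right between $\ell^\infty$ and $\ell^p$: the adaptation condition gives the $\ell^\infty$ bound $\|f\chi^m_j\|_{\ell^p(L^1)}\lqs 2^{-m/p}\|f\|_{\ell^p(L^1)}$, while the disjointness of $\{E^m_j\}_j$ together with the elementary inequality $\sum_j c_j^p \lqs (\sum_j c_j)^p$ (valid for $p\gqs 1$, $c_j\gqs 0$, applied to $c_j=\int_n^{n+1}|f|\chi^m_j$ for each integer cell $(n,n+1)$) produces the uniform $\ell^p$-bound $\sum_j \|f\chi^m_j\|_{\ell^p(L^1)}^p \lqs \|f\|_{\ell^p(L^1)}^p$. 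These combine through $\|a\|_{\ell^2}^2 \lqs \|a\|_{\ell^\infty}^{2-p}\|a\|_{\ell^p}^p$ to yield the single-level estimate
\[
\left(\sum_j\|f\chi^m_j\|_{\ell^p(L^1)}^2\right)^{1/2} \lqs 2^{-m(1/p - 1/2)}\|f\|_{\ell^p(L^1)}.
\]

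Summing over $m$ against the weight $m^s$ then gives
\[
\|G^{(s)}_{P(f)}\|_{L^q(J)} \lqs \|P\| \cdot \|f\|_{\ell^p(L^1)}\sum_{m=1}^\infty m^s\cdot 2^{-m(1/p-1/2)},
\]
and the hypothesis $p<2$ ensures $1/p - 1/2 > 0$, so the $m$-series converges to a constant $C(p,s)$, which proves (a). For part (b), I would apply the triangle inequality inside the $\ell^2$ sum to split
\[
\left(\sum_j |P_1(f_1\chi^m_j) + P_2(f_2\chi^m_j)|^2\right)^{1/2} \lqs \left(\sum_j|P_1(f_1\chi^m_j)|^2\right)^{1/2} + \left(\sum_j|P_2(f_2\chi^m_j)|^2\right)^{1/2},
\]
and then apply the single-operator argument of (a) to each piece. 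Because the martingale structure is adapted to $|f_1|+|f_2|$ and $|f_i|\lqs |f_1|+|f_2|$ pointwise, the same $2^{-m/p}$ decay is available, with $\|f_i\chi^m_j\|_{\ell^p(L^1)}$ controlled by $\||f_1|+|f_2|\|_{\ell^p(L^1)}\lqs \|f_1\|_{\ell^p(L^1)} + \|f_2\|_{\ell^p(L^1)}$, giving the advertised estimate.

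The only real obstacle is the bookkeeping of the interpolation over the $2^m$ pieces. The key structural fact is that the adapted martingale condition supplies an $\ell^\infty$-bound decaying like $2^{-m/p}$, while disjointness supplies a uniform $\ell^p$-bound; these combine to produce decay at rate $2^{-m(1/p-1/2)}$, which is summable against any polynomial weight $m^s$ precisely because $p<2$. The sublinear case is identical, since only the single-term operator bound $\|P(f\chi^m_j)\|_{L^q(J)}\lqs \|P\|\,\|f\chi^m_j\|_{\ell^p(L^1)}$ is used, not any decomposition identity.
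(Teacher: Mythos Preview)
Your argument is correct and follows essentially the same route as the original Christ--Kiselev proof (which the paper cites rather than reproduces): the key ingredients are Minkowski's inequality to pass the $L^q$-norm inside the $\ell^2$-sum, the operator bound applied termwise, and the interpolation between the adapted $\ell^\infty$-bound $\|f\chi^m_j\|_{\ell^p(L^1)}\lqs 2^{-m/p}\|f\|_{\ell^p(L^1)}$ and the disjointness $\ell^p$-bound to produce the decay rate $2^{-m(1/p-1/2)}$. The paper itself invokes exactly this computation in the proof of Lemma~\ref{ck-CMP-sec8} (the step ``Following the lines of the proof in \cite[Proposition 3.3]{ChristKiselev}''), so your reconstruction matches both the cited source and the paper's own use of the technique.
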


For an integral operator $P$ given by
\[ P(f)(E) = \int_{\bbR_+}p(x,E)f(x)\,\dd x \]
for some measurable function $p(x,E)$ on $\bbR_+\times J$, we define the maximal operator $P^*$ as
\[ P^*(f)(E)=\sup_{y\in\bbR_+}\left| \int_y^{\infty}p(x,E)f(x)\,\dd x \right|. \]

\begin{lemma}[{\cite[Lemma 3.4]{ChristKiselev}}]\label{ck-JFA-lemma3.4-maximalOp}
	The mapping $(f_1,f_2)\mapsto G_{P_1^*(f_1),P_2^*(f_2)}$ also satisfies the conclusion of Lemma \ref{ck-JFA-proposition3.3}.
\end{lemma}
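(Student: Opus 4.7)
The plan follows the standard Christ--Kiselev strategy for transferring $L^p \to L^q$ bounds from an operator to its maximal variant, with the $\cB_s$ weight handled by a dyadic absorption into a slightly larger weight. First, I linearize each $P_i^*$: for every $E \in J$ and every pair $(m,j)$, choose a measurable selection $y = y_{m,j}^{(i)}(E) \in \bbR_+$ (obtained by replacing the supremum with one over a countable dense family of truncations and passing to a limit) such that
\[
P_i^*(f_i \chi^m_j)(E) \lqs 2\bigl| P_i(f_i \chi^m_j \chi_{[y,\infty)})(E) \bigr|.
\]
This reduces the problem to controlling $P_i$ applied to truncated characteristic functions of martingale atoms.

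Second, I resolve the truncation along the martingale tree. For fixed $(E, m, j)$, iterating the dyadic splitting $E^{m+l-1}_{k} = E^{m+l}_{2k-1} \cup E^{m+l}_{2k}$ and noting that at each refinement the cutoff $y$ can lie in at most one of the two halves (the other is then either fully included in or fully excluded from $[y,\infty)$) produces a decomposition
\[
E^m_j \cap [y,\infty) = \bigsqcup_{l \geq 1} E^{m+l}_{J_l(E;m,j)},
\]
where each $J_l(E;m,j)$ is either an index whose atom is contained in $E^m_j$, or is empty. Sublinearity of $P_i$ together with Minkowski's inequality in $\ell^2$ then gives
\[
\biggl( \sum_{j=1}^{2^m} \bigl|P_i(f_i \chi^m_j \chi_{[y_{m,j}^{(i)}(E),\infty)})(E)\bigr|^2 \biggr)^{1/2} \lqs \sum_{l=1}^\infty \biggl( \sum_{j=1}^{2^m} \bigl|P_i(f_i \chi^{m+l}_{J_l(E;m,j)})(E)\bigr|^2 \biggr)^{1/2}.
\]
Because the sets $E^m_j \cap [y_{m,j}^{(i)}(E), \infty)$ are disjoint across $j$ (as the $E^m_j$ themselves are), the map $j \mapsto J_l(E;m,j)$ is injective on its support, so the inner sum is bounded by the full sum $\sum_{j'=1}^{2^{m+l}} |P_i(f_i \chi^{m+l}_{j'})(E)|^2$.

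Finally, multiplying by $m^s$, swapping summations via $M = m+l$, and using $\sum_{m=1}^{M-1} m^s \lqs C_s M^{s+1}$ yields the pointwise estimate
\[
G^{(s)}_{P_i^*(f_i)}(E) \lqs 2 C_s \, G^{(s+1)}_{P_i(f_i)}(E),
\]
and Lemma~\ref{ck-JFA-proposition3.3}(a) applied with exponent $s+1$ in place of $s$ gives the required $L^q(J)$-bound, which is part (a). Part (b), for $G^{(s)}_{P_1^*(f_1), P_2^*(f_2)}$, follows by running the same tree decomposition on both summands inside the $\ell^2$ norm, producing a pointwise bound by $G^{(s+1)}_{P_1(f_1), P_2(f_2)}$, to which Lemma~\ref{ck-JFA-proposition3.3}(b) applies. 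I expect the main obstacle to be the measurable selection of $y_{m,j}^{(i)}(E)$: this is standard (sup over a dense countable family, then a monotone limit) but needs care to ensure all subsequent integrals in $E$ remain well-defined. Once that is in place, the rest is a controlled dyadic bookkeeping essentially identical to the original Christ--Kiselev proof.
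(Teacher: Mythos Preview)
Your argument is correct and is essentially the original Christ--Kiselev tree decomposition carried out in full. The paper, however, takes a shorter and more modular route: it observes (in the sentence immediately following the statement) that once one knows the maximal operator $P^*$ is itself a bounded sublinear operator from $\ell^p(L^1)$ to $L^q(J,\dd E)$ --- which is the Christ--Kiselev maximal inequality --- Lemma~\ref{ck-JFA-proposition3.3} applies verbatim with $P^*$ in place of $P$, and nothing further needs to be said. Your approach instead unfolds the dyadic decomposition of $E^m_j\cap[y,\infty)$ explicitly inside the $\cB_s$ sum and absorbs the resulting geometric series by upgrading $s$ to $s+1$; this is more self-contained (it does not invoke the maximal lemma as a black box) but also longer, and it incurs the harmless $s\mapsto s+1$ loss that the paper's route avoids.

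One minor inaccuracy in your part~(b): the cutoffs $y_{m,j}^{(1)}(E)$ and $y_{m,j}^{(2)}(E)$ are chosen separately for the two operators, so the tree indices $J_l$ need not coincide, and you cannot recombine into a single $|P_1(f_1\chi^{m+l}_{J_l})+P_2(f_2\chi^{m+l}_{J_l})|$ term. What you actually obtain is a pointwise bound by $G^{(s+1)}_{P_1(f_1)}(E)+G^{(s+1)}_{P_2(f_2)}(E)$ rather than by $G^{(s+1)}_{P_1(f_1),P_2(f_2)}(E)$; since $P_i^*\gqs 0$ and Minkowski gives $G^{(s)}_{P_1^*(f_1),P_2^*(f_2)}\lqs G^{(s)}_{P_1^*(f_1)}+G^{(s)}_{P_2^*(f_2)}$, this is enough, and two applications of part~(a) finish the job.
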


We note that the proof to Lemma \ref{ck-JFA-lemma3.4-maximalOp} is identical to that of Lemma \ref{ck-JFA-proposition3.3} upon showing that the maximal operator $P^*$ of a bounded linear or sublinear operator $P$ is also bounded. 

For suitable functions $\zeta:\bbR_+ \times \bbR_+ \to \bbC$, we will consider the operator $S_{\zeta}$ and its maximal operator $S_\zeta^*$ on $\ell^p(L^1)$, given by
\begin{equation}\label{def-SandS*}
\begin{split}
	S_{\zeta}(f)(E) & = \int_{\bbR_+}\zeta(x,E)e^{-ih(x,E)}f(x)\,\dd x, \\
	(S_{\zeta})^*(f)(E) & = \sup_{y\in\bbR_+}\left| \int_y^{\infty}\zeta(x,E)e^{-ih(x,E)}f(x)\,\dd x \right|,
\end{split} 
\end{equation}

\begin{lemma}\label{liu-lemma2.8}
Let $p\in [1,2]$ and $p'=\frac{p}{p-1}$ be the conjugate of $p$ ($p'=\infty$ when $p=1$). If $\tilde J \subset \bbR_+$ is a compact interval and $J \subset \intt \tilde J$ a compact interval, and $\zeta$ obeys
\begin{equation}\label{cond-Zeta}
	\sup_{x\in\mathbb R_+,E\in \tilde{J}}\sum_{i=1}^2|\partial^i_E\zeta(x,E)|\lqs C_1,
\end{equation}
for some constant $C<\infty$, then $S_\zeta, S_\zeta^*: \ell^p(L^1) \to L^{p'}(J,\dd E)$ given by \eqref{def-SandS*} are well-defined bounded operators;  
for any $f\in \ell^p(L^1)$,
	\begin{equation}\label{opS-bound}
		\|S_{\zeta}(f)\|_{L^{p'}(J,\dd E)} \lqs  C \|f\|_{\ell^p(L^1)},\quad \| S_{\zeta}^*(f)\|_{L^{p'}(J,\dd E)} \lqs C \|f\|_{\ell^p(L^1)},
	\end{equation} 
	where $C$ depends on $J, \tilde{J},p$, and $C_1$ is as in \eqref{cond-Zeta}.
\end{lemma}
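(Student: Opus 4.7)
The plan is to establish the $p=1$ and $p=2$ endpoint bounds for $S_\zeta$ separately, interpolate to cover intermediate $p$, and then derive the bounds for the maximal operator. At $p=1$, since $\tilde J$ is compact and $|\partial_E\zeta|\le C_1$, integrating in $E$ from a reference point shows that $\zeta$ is uniformly bounded on $\bbR_+\times\tilde J$, so a direct pointwise estimate gives $|S_\zeta(f)(E)|\le C\|f\|_{L^1}\le C\|f\|_{\ell^1(L^1)}$, and the same bound holds for $(S_\zeta)^*$.

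The core of the argument is the $p=2$ endpoint, which I would treat by a $TT^*$ orthogonality scheme. Decompose $f=\sum_{n\ge 0}f_n$ with $f_n=f\chi_{[n,n+1)}$, so that $\|f\|_{\ell^2(L^1)}^2=\sum_n\|f_n\|_{L^1}^2$, fix a smooth cutoff $\chi$ with $\chi\equiv 1$ on $J$ and $\supp\chi\subset\tilde J$, and write
\[
\int_J|S_\zeta f(E)|^2\,\dd E\le\sum_{n,m}\iint f_n(x)\overline{f_m(y)}\,K(x,y)\,\dd x\,\dd y,
\]
where $K(x,y)=\int_{\tilde J}\chi(E)\zeta(x,E)\overline{\zeta(y,E)}\,e^{-i(h(x,E)-h(y,E))}\,\dd E$. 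A direct calculation yields
\[
\partial_E\bigl(h(x,E)-h(y,E)\bigr)=\frac{x-y}{\sqrt E}+\frac{1}{2E^{3/2}}\int_y^x(\tau-\sigma^2)(t)\,\dd t,
\]
and since $\tau-\sigma^2\in L^1_{\loc,\unif}$ the second term is bounded by a constant multiple of $|x-y|$, so for $|x-y|$ large enough the phase derivative satisfies $|\partial_E(h(x,E)-h(y,E))|\ge c|x-y|$ on $\tilde J$. Integrating by parts twice in $E$, with boundary contributions absent thanks to $\chi$ and using precisely the hypotheses on $\partial_E\zeta$ and $\partial_E^2\zeta$, yields $|K(x,y)|\le C(1+|x-y|)^{-2}$; a Schur test in $(n,m)$ then produces $\|S_\zeta f\|_{L^2(J)}\le C\|f\|_{\ell^2(L^1)}$.

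Riesz--Thorin interpolation between the $\ell^1(L^1)\to L^\infty(J)$ and $\ell^2(L^1)\to L^2(J)$ bounds delivers the claim for $S_\zeta$ throughout $p\in[1,2]$. For $(S_\zeta)^*$ with $p\in[1,2)$, the Christ--Kiselev maximal principle upgrades any $\ell^p\to L^q$ bound with $p<q$ to the corresponding bound on the supremum over truncations, so it applies immediately. The endpoint $p=2$ requires a separate argument: the same $TT^*$ kernel estimate can be combined with a Menshov--Rademacher-type dyadic decomposition of the truncation parameter $y$ to handle the supremum, since the decay $|K(x,y)|\le C(1+|x-y|)^{-2}$ is summable.

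The main obstacle is the $p=2$ step for $S_\zeta$. One must verify the non-stationary lower bound on $|\partial_E(h(x,E)-h(y,E))|$ uniformly on $\tilde J$, which is delicate because the second term in $\partial_E h$ picks up the full local-$L^1$ norm of $\tau-\sigma^2$ on $[y,x]$ and is damped only by $E^{-3/2}$. The assumption $J\subset\intt\tilde J$ keeps $E$ bounded away from $0$, and the uniform local-$L^1$ control of $\tau-\sigma^2$ is essential; one then has to carry the two integrations by parts through while keeping careful track of derivatives of the amplitude $\chi(E)\zeta(x,E)\overline{\zeta(y,E)}$, which is where the full strength of the $\partial_E^2\zeta$ hypothesis is used.
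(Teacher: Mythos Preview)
The paper does not give a self-contained argument here; it simply observes that $h(x,E)$ coincides with the phase in Liu's paper (since $Q\in\ell^p(L^1)$) and invokes \cite[Theorem~4.1]{liu_multilinear} directly with $w=\zeta$. Your proposal instead reconstructs the proof from scratch via endpoint bounds, a $TT^*$ kernel estimate, and interpolation, which is the standard scheme underlying such results and presumably close to what Liu's proof does.

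The outline is sound, but there is a genuine gap in the $p=2$ step, precisely at the point you flag as the main obstacle. From $Q=\tau-\sigma^2\in L^1_{\loc,\unif}$ alone you only get $\bigl|\int_y^x Q\bigr|\le M(1+|x-y|)$, so the second term in $\partial_E\bigl(h(x,E)-h(y,E)\bigr)$ is bounded by $\tfrac{M}{2E^{3/2}}|x-y|$ for large $|x-y|$. This is \emph{comparable} to the leading term $\tfrac{|x-y|}{\sqrt E}$, not smaller, and the non-stationary lower bound you claim fails whenever $M\ge 2\inf_{\tilde J}E$; nothing in the hypotheses forces the local $L^1$ norm of $Q$ to be small relative to the energy. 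The fix is to use the assumption that is actually in force throughout the paper, namely $Q\in\ell^p(L^1)$: by H\"older on the sum over unit intervals,
\[
\Bigl|\int_y^x Q\Bigr|\le\|Q\|_{\ell^p(L^1)}\,(|x-y|+1)^{1/p'},\qquad \tfrac1{p'}=1-\tfrac1p\le\tfrac12<1,
\]
so the correction term is genuinely $o(|x-y|)$ and the lower bound $|\partial_E(h(x,E)-h(y,E))|\ge c|x-y|$ holds for all sufficiently large $|x-y|$, uniformly on $\tilde J$. With this correction your two integrations by parts go through as you describe.

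A smaller point: your argument that $\zeta$ is uniformly bounded (needed for the $p=1$ endpoint and for the amplitude in the $TT^*$ step) integrates $\partial_E\zeta$ from a reference energy, but this yields only $|\zeta(x,E)|\le|\zeta(x,E_0)|+C_1|\tilde J|$, and hypothesis \eqref{cond-Zeta} says nothing about $\sup_x|\zeta(x,E_0)|$. In every application in the paper $\zeta$ is $x$-independent (of the form $ak^b$), so this is harmless in context, but as written both the lemma and your argument tacitly require a uniform bound on $\zeta$ itself.
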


\begin{proof}
	The definition of $h(x,E)$ in \eqref{funcDefs} is exactly as that in \cite{liu_multilinear}, i.e., $Q\in \ell^p(L^1)$, and so the result of \cite[Theorem 4.1]{liu_multilinear} holds. Then, the claim follows \cite[Theorem 4.1]{liu_multilinear} with $h=h$ and $w=\zeta$.
\end{proof}

In our application in Section 6--7, $\zeta(E) = \zeta(x,E)$ will be of the form $ak^b$ for some constant $a\in \mathbb C$ and $b\in \mathbb R$; thus, the condition \eqref{cond-Zeta} will always hold for a fixed compact interval $\tilde{J}\subset\mathbb R_+$.

Finally, we introduce the multilinear operator technique considered in \cite{ChristKiselev, CAK2001CMP}. The multilinear operator will be used to define the series solution to the system \eqref{systemInY}, so the following results are essential in later proofs of the WKB asymptotics of eigensolutions.

\begin{definition}\label{multilinear-def}
	A multilinear operator $M_n$ acts on $n$ functions $f_j(x,E)$ by
	\[
	M_n(f_1,\dots,f_n)(x,x',E) = \int_{x \le t_1 \le \dots \le t_n \le x'} \prod_{j=1}^n f_j(t_j, E) \,dt_j.
	\]
	If there exists $f$ such that $f_j\in \{f, \overline{f}\}$ for all $j$, then by abusing the notation, we write 
\[
M_n(f)(x,x', E) = M_n(f_1,\ldots, f_n)(x,x', E).
\] 
\end{definition}

Note that even though the single notation $M_n(f)$ is used for all possible $\{f_1,\ldots, f_n\}\in \{f, \overline{f}\}^n$, there should be no ambiguity since $(f_1,\ldots, f_n)$ will be fixed in the proofs.

\begin{definition}
	The corresponding maximal operators are defined as
	\[
	M_n^*(f_1,\ldots,f_n)(E) = \sup_{x\lqs x'\in\mathbb R_+}\left| M_n(f_1,\ldots, f_n)(x,x',E) \right|.
	\]
	In the case there exists $f$ such that $f_j\in\{f,\overline{f}\}$ for all $j$,
	\[
	M_n^*(f)(E) = \sup_{0<x\lqs x'<\infty}|M_n(f)(x,x', E)|.
	\]
\end{definition}

\begin{theorem}[Christ--Kiselev \cite{Christ2001MaximalFA}]\label{multilinear-christ-kiselev-lemma}
	There exists a universal constant $C_0<\infty$ such that for every martingale structure $\{E^m_j\}$ on $\mathbb R_+$, 
	\[ M_n^*(f_1,\ldots,f_n)(E)\lqs C_0^n\prod_{j=1}^n\|f_j(\cdot, E)\|_{\mathcal B_1}, \]
	and
	\begin{equation}
		M_n^*(f)(E)\lqs C_0^n\frac{\|f(\cdot, E)\|^n_{\mathcal B_1}}{\sqrt{n!}}.
	\end{equation}
\end{theorem}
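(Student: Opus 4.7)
The plan is to induct on $n$ by combining a greedy decomposition of any interval $(x,x')$ into martingale cells with Cauchy--Schwarz applied level by level, following the scheme of Christ--Kiselev \cite{Christ2001MaximalFA}. For the base case $n=1$, I would observe that any interval $(x,x')\subset\bbR_+$ admits a \emph{maximal} decomposition (i.e.\ no two chosen cells are siblings) as a disjoint union of martingale cells $\{E^{m_k}_{j_k}\}$. Because the tree is binary, at each level $m$ only $O(1)$ cells of that level appear (essentially a leftmost and rightmost one). Splitting $\int_x^{x'}f\,dt$ accordingly and applying Cauchy--Schwarz in $j$ at each level yields
\[
\left|\int_x^{x'} f(t,E)\,dt\right|
\le C \sum_{m\ge 1}\Bigl(\sum_{j=1}^{2^m}\bigl|\textstyle\int_{E^m_j}f\,dt\bigr|^2\Bigr)^{1/2}
\le C\|f(\cdot,E)\|_{\cB_1},
\]
which is the desired bound for $n=1$ (the weight $m\ge 1$ in $\cB_1$ is not even needed here).

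For the inductive step, I would write
\[
M_n(f_1,\dots,f_n)(x,x',E)=\int_x^{x'} f_n(t_n,E)\,M_{n-1}(f_1,\dots,f_{n-1})(x,t_n,E)\,dt_n,
\]
apply the greedy decomposition of $(x,x')$ relative to $f_n$, and pull the inner factor out uniformly as $M_{n-1}^*(f_1,\dots,f_{n-1})(E)$. After Cauchy--Schwarz in $j$ on each martingale level of $f_n$, I am left with a sum over levels $m$; this is precisely where the weight $m$ in the $\cB_1$ semi-norm is consumed, absorbing a logarithmic penalty that appears once the outer variable is split across levels and that grows by one factor of $m$ per inductive step. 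The inductive hypothesis applied to the inner factor then yields the bound $C_0^n\prod_j\|f_j(\cdot,E)\|_{\cB_1}$.

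For the symmetric statement with $f_j\in\{f,\overline f\}$, the factor $1/\sqrt{n!}$ arises because the product $\bigl(\int_x^{x'}f\,dt\bigr)^{a}\bigl(\int_x^{x'}\overline f\,dt\bigr)^{n-a}$ over the full cube $[x,x']^n$ decomposes into $n!$ simplicial chambers of comparable magnitude, only one of which is the simplex under consideration. Tracked through the inductive estimate in $L^2$ (square the bound, symmetrize the sum over permutations, take the square root), this produces the $1/\sqrt{n!}$ improvement. The main obstacle in carrying out this plan is the combinatorial bookkeeping: the greedy cell decomposition at each inductive step must be compatible with the simplex order $t_1\le\dots\le t_n$, and the Cauchy--Schwarz passes must be arranged so that the $m$-weight in $\cB_1$ absorbs exactly the per-step level-sum penalty; in the symmetric version, the chamber-counting argument must survive the passage to the maximal operator $M_n^*$, which is the most delicate point.
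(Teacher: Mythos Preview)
The paper does not supply its own proof of this theorem; it is quoted with attribution to \cite{Christ2001MaximalFA} and used as a black box. So there is nothing in the paper to compare against, only the original Christ--Kiselev argument. Your outline is in the right spirit, but two of the steps do not work as you have written them.

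In the inductive step, ``pulling the inner factor out uniformly as $M_{n-1}^*$'' amounts to bounding $\lvert M_{n-1}(f_1,\dots,f_{n-1})(x,t_n,E)\rvert\le M_{n-1}^*$ pointwise in $t_n$; after that, the integral over each martingale cell becomes $\int_{E^{m_k}_{j_k}}\lvert f_n\rvert\,dt$ rather than $\bigl\lvert\int_{E^{m_k}_{j_k}}f_n\,dt\bigr\rvert$. That destroys precisely the cancellation the $\cB_1$ seminorm is designed to capture, and what survives is only an $\ell^1(L^1)$-type bound on $f_n$. The actual mechanism in \cite{Christ2001MaximalFA,ChristKiselev} is not a one-variable induction of this kind: one decomposes the simplex $\{x\le t_1\le\dots\le t_n\le x'\}$ directly into products $E^{m_1}_{j_1}\times\dots\times E^{m_n}_{j_n}$ of martingale cells compatible with the ordering, so that each variable contributes its oscillatory integral $\int_{E^{m_i}_{j_i}}f_i$ intact. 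The weight $m$ in $\cB_1$ is then consumed by the combinatorial count of admissible cell-tuples at prescribed levels, not by a level sum left over after a crude sup bound.

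For the $1/\sqrt{n!}$ improvement, the chamber-counting heuristic you describe is valid for the genuine product $\prod_j\int_x^{x'}f_j$, but $M_n(f)(x,x',E)$ is not such a product (the entries alternate between $f$ and $\overline f$, so the integrand is not symmetric), and the supremum in $M_n^*$ does not commute with squaring and averaging over permutations. In \cite{Christ2001MaximalFA,ChristKiselev} the factorial instead emerges from the same cell-tuple combinatorics: after Cauchy--Schwarz in the cell indices, the number of ordered $n$-tuples of disjoint cells with a given multiset of levels carries a multinomial factor which, summed over levels, produces $(n!)^{-1/2}$. You are right to flag this as the delicate point, but the symmetrization argument you propose does not survive the passage to $M_n^*$.
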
 

\begin{remark}
	The notations $\|f\|_{\mathcal{B}_s}$ and $G^{(s)}_{P_1(f_1), P_2(f_2)}$ will always assume a certain underlying martingale structure $\{E^m_j\}$.
\end{remark}

\section{A series solution and summary of proof}
By Theorem \ref{equivSolutions}, in order to show Theorem \ref{thmWKB} and Theorem \ref{thmWKBhausdorff}, it suffices to produce a solution $Y(x)$ of the system in \eqref{systemInY} satisfying \eqref{WKBrestated2} on some appropriate subset $\mathcal O\subseteq\mathbb R_+$.

We solve the system in \eqref{systemInY} by solving the corresponding integral system:
	\[ Y(x) = \begin{pmatrix}
		1 \\ 0
	\end{pmatrix} -\int_x^{\infty} D(y)Y(y)\,\dd y. \]

By iterations, we obtain a series solution:
\begin{align*}
	Y(x) & = \begin{pmatrix}
		1 \\ 0
	\end{pmatrix} - \int_x^{\infty}D(y)\begin{pmatrix}
		1 \\ 0
	\end{pmatrix}\,\dd y  + \int_x^{\infty}\int_y^{\infty}D(y)D(t)Y(t)\,\dd t\,\dd y \\
	& = \begin{pmatrix}
		1 \\ 0
	\end{pmatrix} - \int_x^{\infty}D(y)\begin{pmatrix}
		1 \\ 0
	\end{pmatrix}\,\dd y  + \int_x^{\infty}\int_y^{\infty}D(y)D(t)\begin{pmatrix}
		1 \\ 0
	\end{pmatrix}\,\dd t\,\dd y \\ 
	& \qquad\qquad \qquad - \int_x^{\infty}\int_y^{\infty}\int_t^{\infty}D(y)D(t)D(s)Y(s)\,\dd s\,\dd t\,\dd y\\
	& \,\,\,\, \vdots \qquad\qquad\qquad\qquad\qquad\qquad \vdots
\end{align*}
\begin{equation}\label{seriesSolution1}
	\qquad\qquad\,\,\,\, Y(x) = \begin{pmatrix}
		1 \\ 0
	\end{pmatrix} + \sum_{k=1}^{\infty}(-1)^k\int\cdots\int_{x\lqs t_1\lqs \cdots\lqs t_k<\infty} D(t_1)\cdots D(t_k)\begin{pmatrix}
		1\\0
	\end{pmatrix}\dd t_k\cdots\dd t_1.
\end{equation}
Note that for even number of multiplications,
\[ D(t_1)\cdots D(t_{2k}) = \begin{pmatrix}
	\mathcal{F}(t_1)\overline{\mathcal{F}}(t_2)\mathcal{F}(t_3)\cdots \overline{\mathcal{F}}(t_{2k}) & 0 \\
	0 & \overline{\mathcal{F}}(t_1)\mathcal{F}(t_2)\overline{\mathcal{F}}(t_3)\cdots \mathcal{F}(t_{2k})
\end{pmatrix}, \]
and for odd number of multiplications,
\[ D(t_1)\cdots D(t_{2k+1}) = \begin{pmatrix}
	0 & \mathcal{F}(t_1)\overline{\mathcal{F}}(t_2)\mathcal{F}(t_3)\cdots \overline{\mathcal{F}}(t_{2k})\mathcal{F}(t_{2k+1}) \\
	\overline{\mathcal{F}}(t_1)\mathcal{F}(t_2)\overline{\mathcal{F}}(t_3)\cdots \mathcal{F}(t_{2k})\overline{\mathcal{F}}(t_{2k+1}) & 0
\end{pmatrix}; \]
thus, using the multilinear operator notation from Definition \ref{multilinear-def},
\[\int\cdots\int_{x\lqs t_1\lqs \cdots\lqs t_{2k}\lqs x'} D(t_1)\cdots D(t_{2k})\begin{pmatrix}
		1\\0
	\end{pmatrix}\dd t_{2k}\cdots\dd t_1 = \begin{pmatrix}
		M_{2k}(\mathcal{F})(x,x', E)\\ 0
	\end{pmatrix},  \]
\[\int\cdots\int_{x\lqs t_1\lqs \cdots\lqs t_{2k+1}\lqs x'} D(t_1)\cdots D(t_{2k+1})\begin{pmatrix}
		1\\0
	\end{pmatrix}\dd t_{2k+1}\cdots\dd t_1 = \begin{pmatrix}
		0 \\ M_{2k+1}(\mathcal{F})(x,x',E)
	\end{pmatrix},  \]
So, the series solution \eqref{seriesSolution1} becomes
\begin{equation}\label{seriesSolution2}
	Y(x) = \begin{pmatrix}
		1 \\ 0
	\end{pmatrix} + \begin{pmatrix}
		\sum_{m=1}^{\infty}M_{2m}(\mathcal{F})(x,\infty,E) \\
		-\sum_{m=0}^{\infty}M_{2m+1}(\mathcal{F})(x,\infty,E)
	\end{pmatrix}.
\end{equation}

Now, it suffices to show that, on some appropriate subset $\mathcal O$ of $\mathbb R_+$, the series in \eqref{seriesSolution2} is well-defined and gives an actual solution to \eqref{systemInY} which satisfies \eqref{WKBrestated2}. 

For Theorem \ref{thmWKB}, the goal is to show that $\mathcal O\subseteq\mathbb R_+$ is a full Lebesgue measure set; as for Theorem \ref{thmWKBhausdorff}, we will show that $\mathcal O=\mathbb R_+\setminus \Lambda$ for some $\Lambda$ with Hausdorff dimension less or equal to $1-\gamma p'$. The proofs will rely on the following general theorem:

\begin{theorem}\label{thm2.5Liu}
	Assume that for $j=1,\ldots, n$, $f_j\in L^1_{\loc}(\mathbb R_+)$. Suppose that there exists a constant $C$ (does not depend on $I$) such that for any closed interval $I\subset\mathbb R_+$,
	\begin{equation}\label{cond1-general}
		\|f_j\chi_I\|_{\mathcal B_1}\lqs C,
	\end{equation}
	and suppose 
	\begin{equation}\label{cond2-general}
		\limsup_{M\to\infty}\|f_j\chi_{[M,\infty)}\|_{\mathcal B_1} = 0.
	\end{equation}
	Then, the limit 
	\begin{equation}\label{multilinearLimit}
		B_n(f_1,\ldots,f_n)(x):=\lim_{y_1,\ldots,y_n\to\infty}\int_x^{y_1}\int_{t_1}^{y_2}\cdots\int_{t_{n-1}}^{y_n}\prod_{j=1}^nf_j(t_j)\,\dd t_1\dd t_2\cdots\dd t_n
	\end{equation}
	is well-defined, and
	\begin{equation}\label{forVerifyingWKB}
		\lim_{x\to\infty} B_n(f_1,\ldots,f_n)(x) = 0.
	\end{equation} 
	Moreover, $B_n(f_1,\ldots,f_n)\in \AC_{\loc}(\mathbb R_+)$ and for almost every $x$,
	\begin{equation}\label{forCheckingDEsystem}
		\frac{\dd B_n(f_1,\ldots,f_n)(x)}{\dd x} = -f_1B_{n-1}(f_2,\ldots,f_n)(x).
	\end{equation}
\end{theorem}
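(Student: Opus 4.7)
The plan is to prove the three assertions—convergence of the iterated limit \eqref{multilinearLimit}, the vanishing \eqref{forVerifyingWKB}, and the differential identity \eqref{forCheckingDEsystem}—simultaneously by strong induction on $n$, carrying two inductive invariants for all $m \lqs n$: the pointwise bound $\lvert B_m(x)\rvert \lqs C_0^m \prod_{j=1}^m \lVert f_j \chi_{[x,\infty)}\rVert_{\cB_1}$, and a uniform bound $\lvert J_m(x, y_1, \ldots, y_m)\rvert \lqs K_m$ on the iterated integral itself. The base case $n = 1$ is essentially elementary: Theorem~\ref{multilinear-christ-kiselev-lemma} with $n=1$ applied to $f_1\chi_{[y,y']}$ gives $\lvert \int_y^{y'} f_1\,\dd t\rvert \lqs C_0 \lVert f_1 \chi_{[y,y']}\rVert_{\cB_1}$, and Minkowski's inequality yields $\lVert f_1 \chi_{[y,y']}\rVert_{\cB_1} \lqs \lVert f_1 \chi_{[y,\infty)}\rVert_{\cB_1} + \lVert f_1 \chi_{[y',\infty)}\rVert_{\cB_1}$, which vanishes by hypothesis \eqref{cond2-general}; so $B_1(x) = \int_x^\infty f_1\,\dd t$ is a convergent improper integral with all required properties.

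For the inductive step, I would first treat the simplex integral $M_n(f_1,\ldots,f_n)(x,y)$ from Definition~\ref{multilinear-def}. The uniform bound $\lvert M_n(x,y)\rvert \lqs C_0^n\prod_j\lVert f_j \chi_{[x,y]}\rVert_{\cB_1} \lqs C_0^n C^n$ follows from Theorem~\ref{multilinear-christ-kiselev-lemma} applied to $f_j \chi_{[x,y]}$. To prove Cauchy convergence as $y\to\infty$, I decompose by $k = \min\{j : t_j > y\}$; the constraint $t_{k-1}\lqs y < t_k$ decouples the integration and gives
\begin{equation*}
M_n(f_1,\ldots,f_n)(x,y') - M_n(f_1,\ldots,f_n)(x,y) = \sum_{k=1}^n M_{k-1}(f_1,\ldots,f_{k-1})(x,y) \cdot M_{n-k+1}(f_k,\ldots,f_n)(y,y').
\end{equation*}
The second factor is bounded by $C_0^{n-k+1} \prod_{j\gqs k}\lVert f_j\chi_{[y,y']}\rVert_{\cB_1}$, whose $j=k$ component vanishes as $y\to\infty$ by the Minkowski argument. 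Hence $\tilde B_n(x) := \lim_{y\to\infty} M_n(x,y)$ exists, and passing to the limit in the pointwise bound yields $|\tilde B_n(x)| \lqs C_0^n \prod_j \lVert f_j \chi_{[x,\infty)}\rVert_{\cB_1}$, confirming both the inductive invariant for $B_n$ and \eqref{forVerifyingWKB}.

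To extend to the iterated integral with independent caps, I observe that under the ordering $t_1\lqs\cdots\lqs t_n$, the constraint $t_j\lqs y_j$ is equivalent to $t_j\lqs\min_{l\gqs j}y_l$, so without loss of generality $y_1\lqs\cdots\lqs y_n$. Then the $J_n$ region lies inside the simplex $R(x,y_n)$, and decomposition by $k = \min\{j : t_j > y_j\}$ (using $t_{k-1}\lqs y_{k-1}\lqs y_k < t_k$ for automatic ordering) gives
\begin{equation*}
M_n(f_1,\ldots,f_n)(x, y_n) - J_n(x, y_1,\ldots, y_n) = \sum_{k=1}^n J_{k-1}(x, y_1, \ldots, y_{k-1}) \cdot M_{n-k+1}(f_k,\ldots,f_n)(y_k, y_n),
\end{equation*}
where $|J_{k-1}|$ is controlled by the inductive uniform bound $K_{k-1}$, and the second factor vanishes as $y_k\to\infty$ because $\lVert f_k \chi_{[y_k, y_n]}\rVert_{\cB_1}$ does. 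Hence $J_n(x, y_1,\ldots, y_n) \to \tilde B_n(x) =: B_n(x)$ as all $y_j \to \infty$, proving \eqref{multilinearLimit}; the same identity also gives $|J_n| \lqs K_n$ for some finite $K_n$, closing the uniform-bound invariant.

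For the ODE and absolute continuity, write $J_n(x, y_1,\ldots, y_n) = \int_x^{y_1} f_1(t_1) J_{n-1}(t_1, y_2,\ldots, y_n)\,\dd t_1$. By the inductive hypothesis, $J_{n-1}(t_1, y_2,\ldots, y_n)\to B_{n-1}(t_1)$ pointwise in $t_1$ as $y_2,\ldots,y_n\to\infty$, and $|J_{n-1}| \lqs K_{n-1}$. Since $f_1\in L^1_{\loc}$, dominated convergence on $[x,y_1]$ (with dominating function $K_{n-1}|f_1|$) gives $J_n \to \int_x^{y_1} f_1 B_{n-1}\,\dd t_1$ as $y_2,\ldots,y_n\to\infty$; letting $y_1\to\infty$ and using the already-established full limit $B_n(x)$ yields $B_n(x) = \int_x^\infty f_1(t_1) B_{n-1}(t_1)\,\dd t_1$ as a convergent improper integral. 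Boundedness of $B_{n-1}$ and $f_1\in L^1_{\loc}$ give $f_1 B_{n-1}\in L^1_{\loc}$, so $B_n(x) = B_n(a) - \int_a^x f_1 B_{n-1}\,\dd t$ is absolutely continuous on compacts, with derivative $-f_1(x) B_{n-1}(x)$ a.e. The main technical challenge throughout is bridging the simplex Christ--Kiselev bounds to the iterated-integral setting: the set-theoretic decomposition by the first violated constraint produces $J_{k-1}$ factors (not $M_{k-1}$) once the caps are independent, which is precisely why the uniform iterated-integral bound must be carried through the induction alongside the pointwise bound.
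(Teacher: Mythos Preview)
Your proof is correct and self-contained; the paper's own proof takes a very different, much shorter route: it simply cites Liu \cite[Theorem~2.5]{liu_multilinear} for the existence of the limit \eqref{multilinearLimit}, the vanishing \eqref{forVerifyingWKB}, and the derivative identity \eqref{forCheckingDEsystem}, and then supplements only the $\AC_{\loc}$ claim by a short induction on $n$ using the already-known identity $B_n(y)-B_n(x) = -\int_x^y f_1 B_{n-1}$. Your argument instead rederives the full content of Liu's theorem. The key tools you use---the simplex decomposition $M_n(x,y') - M_n(x,y) = \sum_k M_{k-1}(x,y)\,M_{n-k+1}(y,y')$ by the first index exceeding $y$, the reduction of the independent-cap integral $J_n$ to the simplex via the monotone replacement $y_j \mapsto \min_{l\gqs j} y_l$, and the two inductive invariants (pointwise decay and uniform bound on $J_m$)---are essentially the ingredients behind Liu's proof, so the mathematics is the same in spirit; what you gain is a transparent, citation-free account, at the cost of length. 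One small point worth making explicit in your write-up: the global bound $\sup_{t>0}\lVert f_j\chi_{[t,\infty)}\rVert_{\cB_1} < \infty$ (needed for local boundedness of $B_{n-1}$) follows from $\lVert f_j\chi_{[t,\infty)}\rVert_{\cB_1} \lqs \lVert f_j\chi_{[t,M]}\rVert_{\cB_1} + \lVert f_j\chi_{[M,\infty)}\rVert_{\cB_1} \lqs C + \epsilon$ for $M$ large, combining \eqref{cond1-general} and \eqref{cond2-general}.
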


\begin{proof}
	All of the above claims, except for the claim $B_n(f_1,\ldots,f_n)\in \AC_{\loc}(\mathbb R_+)$, are explicitly stated in \cite[Theorem 2.5]{liu_multilinear}. The claim that $B_n(f_1,\ldots,f_n)\in \AC_{\loc}(\mathbb R_+)$ can be proved by induction in $n$. Follow along the lines of the proof of \cite[Theorem 2.5]{liu_multilinear}, for any $0<x<y$,
	\[ B_1(f_1)(y) - B_1(f_1)(x) = \int_y^{\infty}f_1(t_1)\,\dd t_1 - \int_x^{\infty}f_1(t_1)\,\dd t_1 = -\int_x^yf_1(t_1)\,\dd t_1 \]
	where $f_1\in L^1_{\loc}(\mathbb R_+)$ and thus $B_1(f_1)\in \AC_{\loc}(\mathbb R_+)$. Next, consider
	\begin{align*}
		B_n(f_1,\ldots,f_n)(y) - B_n(f_1,\ldots,f_n)(x) & = \int_y^xf_1(t_1)\,\dd t_1\int_{t_1}^{\infty}\cdots\int_{t_{n-1}}^{\infty}\prod_{j=2}^nf_j(t_j)\,\dd t_2\cdots\dd t_n \\
		& = -\int_x^yf_1(t_1)B_{n-1}(f_2,\ldots,f_n)(t_1)\,\dd t_1,
	\end{align*}
	where $f_1\in L^1_{\loc}(\mathbb R_+)$ and $B_{n-1}(f_2,\ldots,f_n)\in\AC_{\loc}(\mathbb R_+)$ by induction hypothesis; in particular, $B_{n-1}(f_2,\ldots,f_n)$ is locally bounded, so
	\[ -f_1B_{n-1}(f_2,\ldots,f_n)\in L^1_{\loc}(\mathbb R_+) \] 
	and therefore $B_n(f_1,\ldots,f_n)\in \AC_{\loc}(\mathbb R_+)$.
\end{proof}

We followed the approach in \cite{liu_multilinear} to define the multilinear operators as iterations as opposed to the Christ--Kiselev's approach in \cite[Proposition 4.2]{ChristKiselev}, which relies only on the existence of a weaker limit
\[
\lim_{y\to\infty}\int_x^y\int_{t_1}^y\cdots\int_{t_{n-1}}^y\prod_{j=1}^nf_j(t_j)\,\dd t_1\dd t_2\cdots\dd t_n
\]
in comparison to the limit in \eqref{multilinearLimit}.

To illustrate how to use Theorem \ref{thm2.5Liu} to prove Theorem \ref{thmWKB} and Theorem \ref{thmWKBhausdorff}, note that these follow by applying the following criterion for a large enough set of positive energies $E$:

\begin{lemma}\label{lemmasummaryOfProof}
If for some $E > 0$,
\begin{equation}\label{cond2}
\limsup_{M\to\infty} \|\mathcal F(\cdot,E)\chi_{[M,\infty)}\|_{\mathcal B_1} = 0,
\end{equation}
then there exists an eigensolution at energy $E$ with WKB asymptotic behavior.
\end{lemma}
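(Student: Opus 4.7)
By Theorem~\ref{equivSolutions}, it suffices to produce a solution $Y$ of the ODE system \eqref{systemInY} obeying the asymptotic \eqref{WKBrestated2}. The plan is to justify the formal series \eqref{seriesSolution2} (obtained by iterating the Volterra integral form of the ODE with terminal condition at $\infty$) as such a solution under the hypothesis \eqref{cond2}.

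First, I would establish convergence and basic properties of each term. Since $\sigma \in L^2_{\loc,\unif}(\bbR_+) \subset L^1_\loc(\bbR_+)$ and $\tau \in L^1_{\loc,\unif}(\bbR_+)$, we have $\mathcal F(\cdot, E) \in L^1_\loc(\bbR_+)$. Because $|\bar z| = |z|$ at the level of integrals against indicators, $\|\overline{\mathcal F(\cdot, E)}\|_{\mathcal B_1} = \|\mathcal F(\cdot, E)\|_{\mathcal B_1}$, so \eqref{cond2} is exactly the tail decay hypothesis \eqref{cond2-general} of Theorem~\ref{thm2.5Liu} for any $f_j \in \{\mathcal F(\cdot, E), \overline{\mathcal F(\cdot, E)}\}$; the uniform bound \eqref{cond1-general} follows by splitting an arbitrary $\chi_I$ as $\chi_{I \cap [0,M]} + \chi_{I \cap [M,\infty)}$, choosing $M$ so the second piece has small $\mathcal B_1$-norm, and absorbing the compactly supported part via local integrability. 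Theorem~\ref{thm2.5Liu} then makes each $M_n(\mathcal F)(x,\infty,E)$ appearing in \eqref{seriesSolution2} well-defined as the iterated limit \eqref{multilinearLimit}, places it in $\AC_\loc(\bbR_+)$, and yields $\lim_{x \to \infty} M_n(\mathcal F)(x,\infty,E) = 0$ via \eqref{forVerifyingWKB}.

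For summability and the asymptotic I would apply the Christ--Kiselev bound of Theorem~\ref{multilinear-christ-kiselev-lemma} to the restricted function $\mathcal F(\cdot,E)\chi_{[x,\infty)}$, giving
\[
|M_n(\mathcal F)(x,\infty,E)| \lqs \frac{C_0^n \|\mathcal F(\cdot,E)\chi_{[x,\infty)}\|_{\mathcal B_1}^n}{\sqrt{n!}}.
\]
Thus \eqref{seriesSolution2} converges absolutely and uniformly on $\bbR_+$, being dominated by the entire series $\sum_n C_0^n \|\mathcal F(\cdot,E)\|_{\mathcal B_1}^n/\sqrt{n!}$. Hypothesis \eqref{cond2} forces every term to tend to $0$ as $x \to \infty$, and uniform domination permits interchanging the limit and the sum, yielding \eqref{WKBrestated2}.

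It remains to verify that this $Y$ solves \eqref{systemInY}; I would do this by differentiating term by term using \eqref{forCheckingDEsystem}: $\partial_x M_n(f_1,\dots,f_n)(x,\infty,E) = -f_1(x,E)\, M_{n-1}(f_2,\dots,f_n)(x,\infty,E)$ almost everywhere. The termwise differentiated series is dominated by $|\mathcal F(x,E)| \sum_n C_0^{n-1}\|\mathcal F(\cdot,E)\|_{\mathcal B_1}^{n-1}/\sqrt{(n-1)!}$, a fixed $L^1_\loc$ function times a finite constant, which justifies termwise differentiation of the absolutely convergent series. Collecting contributions, the alternation pattern between $\mathcal F$ and $\overline{\mathcal F}$ inside each $M_n$ matches precisely the off-diagonal entries of $D(x,E)$, so the differentiated series reassembles into $D(x,E) Y(x)$. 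An application of Theorem~\ref{equivSolutions} then produces the desired eigensolution. I expect the main technical obstacle to be precisely this last bookkeeping step --- confirming that the even- and odd-index subseries of $Y$ transform into one another under $D(x,E)$ --- but this is simply a matter of matching the conjugation pattern already spelled out between \eqref{seriesSolution1} and \eqref{seriesSolution2}.
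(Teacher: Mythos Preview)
Your overall strategy matches the paper's: verify the hypotheses of Theorem~\ref{thm2.5Liu}, invoke Theorem~\ref{multilinear-christ-kiselev-lemma} for summability, and check the ODE term by term. However, there is a genuine gap in your derivation of the uniform bound \eqref{cond1-general}. You claim that the compactly supported piece $\|\mathcal F(\cdot,E)\chi_{I\cap[0,M]}\|_{\mathcal B_1}$ can be ``absorbed via local integrability,'' but local integrability does \emph{not} imply finiteness of the $\mathcal B_1$-seminorm for a general martingale structure. For instance, if $\{E_j^m\}$ is such that a fixed interval $[0,1]$ remains inside $E_1^m$ for every $m$ (compatible with the axioms: at each stage place the first split point to the right of $1$), then $\|\chi_{[0,1]}\|_{\mathcal B_1}=\sum_m m=\infty$. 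The same issue contaminates your use of $\|\mathcal F(\cdot,E)\|_{\mathcal B_1}$ as a global dominating constant for the series: nothing in \eqref{cond2} forces this quantity to be finite, and the $\mathcal B_1$-seminorm is not monotone under restriction.

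The paper sidesteps this entirely. From \eqref{cond2} one chooses $C_1$ with $\|\mathcal F(\cdot,E)\chi_{[x,\infty)}\|_{\mathcal B_1}\le 1$ for all $x\ge C_1$; the seminorm property (Lemma~\ref{B-seminorm}) then yields $\|\mathcal F(\cdot,E)\chi_{[x,y]}\|_{\mathcal B_1}\le \|\mathcal F\chi_{[x,\infty)}\|_{\mathcal B_1}+\|\mathcal F\chi_{[y,\infty)}\|_{\mathcal B_1}\le 2$ for every $[x,y]\subset[C_1,\infty)$. The whole construction---Theorem~\ref{thm2.5Liu}, the series \eqref{seriesSolution2}, the domination $|M_n|\le C_0^n 2^n/\sqrt{n!}$, and the ODE verification---is carried out on $[C_1,\infty)$ with the uniform constant $C(E)=2$. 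The resulting eigensolution is then extended to $[0,C_1]$ by solving \eqref{eigenfunctionMatrix} backward, and a fixed phase $e^{i\phi}$ with $\phi=kC_1-\tfrac1{2k}\int_0^{C_1}\tau$ corrects the lower limit in the WKB exponent. This restriction-and-extension step is the missing ingredient; once you insert it, the remainder of your outline (termwise differentiation, matching the conjugation pattern to $D(x,E)$) goes through as written.
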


\begin{proof}
Note that \cite{liu_multilinear} assumes a second condition that
\begin{equation}\label{cond1}
\|\mathcal F(\cdot, E)\chi_I\|_{\mathcal B_1}\lqs C(E)
\end{equation}
for every interval $I$, with a constant $C(E)$ independent of $I$. However, \eqref{cond2} implies existence of $C_1$ such that for every $x \ge C_1$,
\[
\|\mathcal F(\cdot,E)\chi_{[x,\infty)}\|_{\mathcal B_1} \le 1.
\]
Since $\lVert \cdot \rVert_{\cB_1}$ is a seminorm, this implies for every interval $I = [x,y] \subset [C_1, \infty)$ that \eqref{cond1} holds, 
with an explicit constant $C(E) = 2$. The rest of this proof can be done on the interval $[C_1, \infty)$; the eigensolution then extends to $[0,\infty)$, and if $u$ obeys
\[
u(x) = e^{ik(x-C_1) - \frac{i}{2k} \int_{C_1}^x \tau(t)\,dt} + o(1), \qquad x \to \infty
\]
then the eigensolution  $e^{i\phi} u$, $\phi = {kC_1 - \frac 1{2k} \int_0^{C_1} \tau(t) \,dt}$, obeys \eqref{WKB}, and similarly \eqref{WKBquasiderivative}.

Note that the conditions \eqref{cond1}, \eqref{cond2} correspond respectively to the assumptions \eqref{cond1-general}, \eqref{cond2-general} in Theorem \ref{thm2.5Liu}. Firstly, \eqref{multilinearLimit} applied to the current scenario implies that the limit
	\[
	M_{n}(\mathcal F)(x,\infty, E) = \lim_{x'\to\infty}M_{n}(\mathcal F)(x,x',E)
	\]
	is well-defined. Then, by Theorem \ref{multilinear-christ-kiselev-lemma} and \eqref{cond1},
	\begin{equation}\label{dominantBound}
		|M_{n}(\mathcal F)(x,\infty,E)|\lqs C_0^{n}\frac{C(E)^{n}}{\sqrt{n!}}.
	\end{equation}
	Thus, the two series
	\[
	\sum_{m=1}^{\infty}M_{2m}(\mathcal{F})(x,\infty,E), \qquad 
	-\sum_{m=0}^{\infty}M_{2m+1}(\mathcal{F})(x,\infty,E)
	\]
	converge absolutely. Thus, the series in \eqref{seriesSolution2} is well-defined.
	
	On the other hand, let us see that the claimed WKB asymptotic behavior \eqref{WKBrestated2} follows from \eqref{forVerifyingWKB}: by Lebesgue dominated convergence theorem with the counting measure and the dominating sequence given in \eqref{dominantBound}, the pointwise decay \eqref{forVerifyingWKB} implies decay of the series:
	\begin{equation}\label{seriesIsZeroAtInfty}
		\lim_{x\to\infty}\sum_{m=1}^{\infty}M_{2m}(\mathcal{F})(x,\infty,E) = \sum_{m=1}^{\infty}\lim_{x\to\infty}M_{2m}(\mathcal{F})(x,\infty,E) = 0, 
	\end{equation}
and similarly for the other series, so $Y(x) \to \binom 10$ as $x \to \infty$.
	
	Finally, we verify that the series in \eqref{seriesSolution2} gives an actual solution to the differential system in \eqref{systemInY}. In order to show that 
	\[ \frac{\dd }{\dd x}\left(\sum_{m=1}^{\infty}M_{2m}(\mathcal{F})(x,\infty,E)\right) = -\mathcal F(x,E)\cdot \sum_{m=0}^{\infty} M_{2m+1}(\mathcal F)(x,\infty, E), \]
	it suffices to check for $0\lqs x<y$,
	\begin{equation}\label{DETOintegralCondition}
		\left(\sum_{m=1}^{\infty}M_{2m}(\mathcal{F})(t,\infty,E)\right)\Bigg|_{x}^{y} = \int_x^y\left(\sum_{m=0}^{\infty}-\mathcal F(t,E)M_{2m+1}(\mathcal F)(t,\infty,E)\right)\,\dd t.
	\end{equation}
	By Theorem \ref{thm2.5Liu}, $M_{2m}(\mathcal F)(x,\infty,E)\in \AC_{\loc}(\mathbb R_+)$ for any $m\in\mathbb N$, and thus
	\[ \big( M_{2m}(\mathcal{F})(x,\infty,E)\big)\Big|_x^y = \int_x^y-\mathcal F(t,E)M_{2m-1}(t,\infty,E)\,\dd t. \]
	Since the series
	\[ 
	\sum_{m=1}^{\infty}M_{2m}(\mathcal{F})(x,\infty,E) 
	\] 
	is absolutely convergent for $E\in\mathcal O$, it follows that
	\[ \left(\sum_{m=1}^{\infty}M_{2m}(\mathcal{F})(t,\infty,E)\right)\Bigg|_{x}^{y} = \sum_{m=1}^{\infty}\big(M_{2m}(\mathcal{F})(t,\infty,E)\big)\Big|_x^y; \]
	on the other hand,
	\begin{align*}
		\sum_{m=1}^{\infty}\int_x^y-\mathcal F(t,E)M_{2m-1}(t,\infty,E)\,\dd t & = \int_x^y \left(\sum_{m=1}^{\infty}-\mathcal F(t,E)M_{2m-1}(t,\infty,E)\right)\,\dd t \\
		& = \int_x^y\left(\sum_{m=0}^{\infty}-\mathcal F(t,E)M_{2m+1}(\mathcal F)(t,\infty,E)\right)\,\dd t,
	\end{align*}
	where the first equality follows from Lebesgue Dominated Convergence with the dominating function \eqref{dominantBound} and the fact that $\mathcal F\in L^1_{\loc}(\mathbb R_+)$.
	
	Similar reasoning can be applied to the odd summation $\{M_{2m+1}\}$ to conclude that
	\[ \frac{\dd }{\dd x}\left(-\sum_{m=0}^{\infty}M_{2m+1}(\mathcal{F})(x,\infty,E)\right) = \ol{\mathcal F(x,E)}  + \ol{\mathcal F(x,E)} \cdot\sum_{m=1}^{\infty}M_{2m}(\mathcal F)(x,\infty, E), \]
	where we also used the fact that 
	\[ \frac{\dd}{\dd x}M_1(\mathcal F)(x,\infty,E) = \frac{\dd}{\dd x}\int_x^{\infty} \ol{\mathcal F(t,E)}\,\dd t = -\ol{\mathcal F(x,E)}. \]
	Thus, 
	\[ Y'(x) = \begin{pmatrix}
					-\mathcal F(x,E)\cdot \sum_{m=0}^{\infty} M_{2m+1}(\mathcal F)(x,\infty, E)\\
					\ol{\mathcal F(x,E)}  + \ol{\mathcal F(x,E)} \cdot\sum_{m=1}^{\infty}M_{2m}(\mathcal F)(x,\infty, E)
				\end{pmatrix}. \]
	On the other side of the system in \eqref{systemInY}, direct computation gives
	\begin{align*}
		D(x,E)Y(x) & = \begin{pmatrix}
				0 & \cF(x,E) \\
				\ol{\cF(x,E)} & 0
				\end{pmatrix} \left(\begin{pmatrix}
		1 \\ 0
	\end{pmatrix} + \begin{pmatrix}
		\sum_{m=1}^{\infty}M_{2m}(\mathcal{F})(x,\infty,E) \\
		-\sum_{m=0}^{\infty}M_{2m+1}(\mathcal{F})(x,\infty,E)
	\end{pmatrix}\right) \\
				& = \begin{pmatrix}
					0 \\ \ol{\cF(x,E)}
				\end{pmatrix} + 
				\begin{pmatrix}
		-\mathcal F(x,E)\sum_{m=0}^{\infty}M_{2m+1}(\mathcal{F})(x,\infty,E) \\
		\ol{\cF(x,E)}\sum_{m=1}^{\infty}M_{2m}(\mathcal{F})(x,\infty,E)
	\end{pmatrix}.
	\end{align*}
	Thus, the series in \eqref{seriesSolution2} indeed gives a solution to the system \eqref{systemInY} which satisfies the WKB asymptotic in \eqref{WKBrestated2}. Thus, Theorem \ref{equivSolutions} applies at this energy.
\end{proof}

\section{Proof of Theorem \ref{thmWKB}}

\begin{lemma}\label{coro4.3}
Assume $\sigma, \tau - \sigma^2 \in \ell^p(L^1)$ 
	and fix a martingale structure 
	\begin{equation}
		\{E^m_j\subset\bbR_+:m\in\bbZ_+, 1\lqs j\lqs 2^m\}\quad \text{(adapted in $\ell^p(L^1)$) to}\quad |\tau - \sigma^2|+|\sigma|.
	\end{equation} 
Then, for Lebesgue-\emph{a.e.}\ $E\in \bbR_+$
	\begin{equation}\label{coro4.3eq2}
		\limsup_{M\to \infty} \|\mathcal{F}(\cdot ,E)\chi_{[M,\infty)}\|_{\mathcal{B}_1} = 0.
	\end{equation}
\end{lemma}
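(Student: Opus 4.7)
The plan is to decompose $\cF$ into two pieces of the form handled by Lemma~\ref{liu-lemma2.8}, then apply the bilinear/maximal machinery of Lemma~\ref{ck-JFA-lemma3.4-maximalOp} to obtain an $L^{p'}$-in-$E$ bound on the tail $\cB_1$-quantity, and finally convert this into an almost-everywhere statement via a monotonicity argument that uses the maximal operators $S^*_{\zeta_i}$ rather than the bare $S_{\zeta_i}$.

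From \eqref{funcDefs} and $\tilde Q = (\tau-\sigma^2)+2ik\sigma$ one has
$$\cF(x,E) = -\tfrac{i}{2k}\, e^{-ih(x,E)}\bigl(\tau(x)-\sigma(x)^2\bigr) + e^{-ih(x,E)}\sigma(x),$$
so set $\zeta_1(E) = -i/(2\sqrt E)$ and $\zeta_2 \equiv 1$. For any compact $\tilde J \subset (0,\infty)$ bounded away from $0$, both $\zeta_i$ satisfy \eqref{cond-Zeta}, so by Lemma~\ref{liu-lemma2.8} the operators $S^*_{\zeta_1}, S^*_{\zeta_2} : \ell^p(L^1) \to L^{p'}(J,\dd E)$ are bounded for every compact $J \subset \intt \tilde J$. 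Writing $\chi^m_{j,M} := \chi^m_j \chi_{[M,\infty)}$, set
$$\Gamma_M(E) := \sum_{m\ge 1} m \left( \sum_{j=1}^{2^m} \Bigl( S^*_{\zeta_1}\bigl((\tau-\sigma^2)\chi^m_{j,M}\bigr)(E) + S^*_{\zeta_2}\bigl(\sigma\chi^m_{j,M}\bigr)(E) \Bigr)^2 \right)^{1/2}.$$
Applying Lemma~\ref{ck-JFA-lemma3.4-maximalOp} with $P_i = S_{\zeta_i}$, $f_1 = (\tau-\sigma^2)\chi_{[M,\infty)}$, $f_2 = \sigma\chi_{[M,\infty)}$ (noting that the given martingale structure, adapted to $|\tau-\sigma^2|+|\sigma|$, serves equally for the pointwise smaller $|f_1|+|f_2|$ with a uniform-in-$M$ adaptedness constant), we obtain
$$\|\Gamma_M\|_{L^{p'}(J)} \le C \bigl( \|(\tau-\sigma^2)\chi_{[M,\infty)}\|_{\ell^p(L^1)} + \|\sigma\chi_{[M,\infty)}\|_{\ell^p(L^1)} \bigr),$$
and the right-hand side tends to $0$ as $M\to\infty$ since $\sigma,\tau-\sigma^2 \in \ell^p(L^1)$.

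The key observation is that $\Gamma_M$ pointwise dominates $H_M(E) := \|\cF(\cdot,E)\chi_{[M,\infty)}\|_{\cB_1}$ and is monotone nonincreasing in $M$. Dominance follows because, writing $E^m_j = [a^m_j,b^m_j]$, the integral $\int_{E^m_j\cap [M,\infty)} \cF(x,E)\,\dd x$ is the value at $y = \max(a^m_j,M)$ of the $y$-indexed tail integrals of $\zeta_i e^{-ih}$ against $(\tau-\sigma^2)\chi^m_j$ respectively $\sigma\chi^m_j$, hence bounded by the triangle inequality by $S^*_{\zeta_1}((\tau-\sigma^2)\chi^m_{j,M})(E) + S^*_{\zeta_2}(\sigma\chi^m_{j,M})(E)$; Minkowski in $j$ and summing in $m$ gives $H_M \le \Gamma_M$. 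Monotonicity is immediate: if $M' \ge M$, then $g\chi^m_{j,M'}$ is supported in a subinterval of the support of $g\chi^m_{j,M}$, so the supremum defining $S^*_{\zeta_i}(g\chi^m_{j,M'})(E)$ is taken over a smaller range of $y$ and is dominated by $S^*_{\zeta_i}(g\chi^m_{j,M})(E)$. Consequently $\Gamma_\infty(E) := \lim_{M\to\infty} \Gamma_M(E)$ exists pointwise, and by Fatou combined with the $L^{p'}$-bound above, $\|\Gamma_\infty\|_{L^{p'}(J)} = 0$, so $\Gamma_\infty = 0$ a.e.\ on $J$. Since $0 \le H_M(E) \le \Gamma_M(E) \to \Gamma_\infty(E)$, we conclude $\limsup_{M\to\infty} H_M(E) = 0$ for a.e.\ $E \in J$, and exhausting $(0,\infty)$ by such compact intervals gives the conclusion a.e.\ on $\bbR_+$.

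The main obstacle is the passage from $L^{p'}$-in-$E$ control to a pointwise a.e.\ $\limsup$ statement: $L^{p'}$-convergence alone yields a.e.\ convergence only along a subsequence, which is not enough for the hypothesis of Lemma~\ref{lemmasummaryOfProof}. Replacing the signed integrals by the maximal operators $S^*_{\zeta_i}$ is precisely the device that produces a dominating quantity $\Gamma_M$ genuinely monotone in $M$, at the cost of an inessential upper bound. A secondary technicality is the $E$-dependence of $\zeta_1 = -i/(2\sqrt E)$ near $E=0$, which forces us to work on compacta bounded away from the origin and then exhaust.
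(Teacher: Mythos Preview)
Your proof is correct and follows essentially the same approach as the paper: split $\cF$ according to $\tilde Q = Q + 2ik\sigma$, invoke the boundedness of the operators $S_\zeta$ from Lemma~\ref{liu-lemma2.8} together with the Christ--Kiselev estimate of Lemma~\ref{ck-JFA-lemma3.4-maximalOp} to bound $\lVert \cF(\cdot,E)\chi_{[M,\infty)}\rVert_{\cB_1}$ in $L^{p'}(J)$ by the $\ell^p(L^1)$ tails of $Q$ and $\sigma$, and then exhaust $\bbR_+$ by compact intervals $J$. Your explicit monotone majorant $\Gamma_M$ built from the maximal operators $S^*_{\zeta_i}$ is a clean way to make rigorous the passage from $L^{p'}$-decay to $\limsup_{M\to\infty}=0$ almost everywhere, a step the paper records more tersely.
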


\begin{proof}
For readability, in this proof we write $\cB = \cB_1$. We fix a compact $K \subset \bbR_+$ and prove that \eqref{coro4.3eq2} holds for Lebesgue-a.e.\ $E \in K$. We  estimate
	\begin{align*}
		\|\mathcal{F}(\cdot, E)\chi_{[M,\infty)}\|_{\mathcal{B}} & = \left\| \left\{ \int_{E_j^m}\mathcal{F}(x,E)\chi_{[M,\infty)}(x)\,\dd x \right\} \right\|_{\mathcal{B}} \\
		& = \left\| \left\{ \int_{\bbR_+}w(x,E)e^{-ih(x,E)}\tilde{Q}(x,E)\chi_{[M,\infty)}(x)\chi^m_j(x)\,\dd x \right\} \right\|_{\mathcal{B}} \\
		& = \|\{S_w(Q\chi_{[M,\infty)}\chi^m_j)(E) + S_1(\sigma\chi_{[M,\infty)}\chi^m_j)(E)\}\|_{\mathcal{B}} \\
		& = G_{S_w(Q\chi_{[M,\infty)}), S_1(\sigma\chi_{[M,\infty)})}(E),
	\end{align*}
where $S_w, S_1$ refers to operators in \eqref{def-SandS*} with $\zeta = w$ and $\zeta =1$, respectively.

By Lemma \ref{liu-lemma2.8}, the operators $S_w,S_1$ are bounded since $K\subseteq \mathbb R_+$ is compact. Thus, by Lemma \ref{ck-JFA-lemma3.4-maximalOp},
\[
\|G_{S_w(Q\chi_{[M,\infty)}),S_1(\sigma\chi_{[M,\infty)})}(E)\|_{L^{p'}(K,\dd E)} \lqs C(\|Q\chi_{[M,\infty)}\|_{\ell^p(L^1)} + \|\sigma\chi_{[M,\infty)}\|_{\ell^p(L^1)}),
\]
where $C<\infty$ depends only on $p$ and the operator norm of $S_w, S_1$. Then, 
\begin{align*}
	\limsup_{M\to\infty}\left( \int_K\|\mathcal{F}(\cdot,E)\chi_{[M,\infty)}\|^{p'}_{\mathcal{B}}\,\dd E\right)^{1/p'} & = \limsup_{M\to\infty}\|G_{S_w(Q\chi_{[M,\infty)}), S_1(\sigma\chi_{[M,\infty)})}\|_{L^{p'}(K,\dd E)} \\
	& \lqs C(K)\cdot\limsup_{M\to\infty}(\|Q\chi_{[M,\infty)}\|_{\ell^p(L^1)} + \|\sigma\chi_{[M,\infty)}\|_{\ell^p(L^1)})\\
	& = 0,
\end{align*}
which implies that \eqref{coro4.3eq2} holds for almost every $E\in K$.
\end{proof}

\begin{proof}[Proof of Theorem \ref{thmWKB}]
If $V \in \ell^p(H^{-1})$, by Lemma~\ref{weightedEllpH-1space}	with $\gamma = 0$, there is a decomposition $V=\sigma' + \tau$ for some $\sigma, \tau$ such that $\sigma, \tau - \sigma^2 \in \ell^p(L^1)$. The proof is completed by Lemma \ref{coro4.3} and Lemma~\ref{lemmasummaryOfProof}.
\end{proof}

\section{Proof of Theorem \ref{thmWKBhausdorff}}

We denote by $\mathcal H^{\beta}$ the $\beta$-dimensional Hausdorff measure on $\bbR$.

\begin{lemma}\label{ck-CMP-sec8}
Let $p\in (1,2]$, $\gamma > 0$ with $\gamma p' \le 1$, where $1/p + 1/p'=1$. Assume that
\[
(1+x)^\gamma \sigma(x), (1+x)^\gamma (\tau(x) - \sigma(x)^2 ) \in \ell^p(L^1).
\]
Fix a martingale structure $\{E^m_j\subset\bbR_+:m\in\bbZ_+, 1\lqs j\lqs 2^m\}$ adapted in $\ell^p(L^1)$ to the function $(1+x)^{\gamma}(|\tau - \sigma^2|+|\sigma|)$. Denote
\[
\Lambda_c = \{E\in \bbR_+: \|\mathcal F(\cdot, E)\chi_{[N,\infty)}\|_{\cB_2}\gqs c\ \forall N  \}.
\]
Then $\mathcal H^{\beta}(\Lambda_c) = 0$ for every $\beta >1-\gamma p'$.
\end{lemma}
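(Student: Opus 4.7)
The plan is to argue by contradiction using Frostman's lemma. Suppose $\mathcal H^\beta(\Lambda_c) > 0$ for some $\beta > 1 - \gamma p'$. Since $\Lambda_c = \bigcap_N \{E : \|\mathcal F(\cdot,E)\chi_{[N,\infty)}\|_{\cB_2} \ge c\}$ is Borel, Frostman's lemma produces a positive finite Borel measure $\mu$ supported on a compact subset of $\Lambda_c$ with $\mu(\Lambda_c) > 0$ and $\mu(I) \le |I|^\beta$ for every interval $I \subset \bbR$. By Chebyshev's inequality and the defining property of $\Lambda_c$,
$$c^{p'} \mu(\Lambda_c) \le \int \|\mathcal F(\cdot, E)\chi_{[N,\infty)}\|_{\cB_2}^{p'}\, \dd\mu(E) \qquad \forall N.$$
The goal is to show that the right-hand side tends to zero as $N \to \infty$, which forces $\mu(\Lambda_c) = 0$ and contradicts the choice of $\mu$; hence $\mathcal H^\beta(\Lambda_c) = 0$.

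Following the rewriting from the proof of Lemma \ref{coro4.3} applied with $s = 2$ one has
$$\|\mathcal F(\cdot, E) \chi_{[N,\infty)}\|_{\cB_2} = G^{(2)}_{S_w((\tau-\sigma^2) \chi_{[N,\infty)}),\, S_1(\sigma\chi_{[N,\infty)})}(E),$$
so the key analytic task is to upgrade Lemma \ref{liu-lemma2.8} and the associated $G^{(s)}$-level bound of Lemma \ref{ck-JFA-proposition3.3} from $L^{p'}(K,\dd E)$ to $L^{p'}(\dd\mu)$, at the cost of inserting a weight $(1+x)^\gamma$ on the input side; explicitly, one needs
$$\|S_\zeta f\|_{L^{p'}(\dd\mu)} \le C(\mu, \tilde J, \zeta, \gamma)\, \|(1+x)^\gamma f\|_{\ell^p(L^1)} \qquad \text{whenever } \beta > 1 - \gamma p',$$
together with its maximal-operator and $G^{(2)}$-level variants. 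Granted these, applying the $G^{(2)}$-level bound with $f_1 = (\tau-\sigma^2)\chi_{[N,\infty)}$ and $f_2 = \sigma\chi_{[N,\infty)}$ yields
$$\Big(\int \|\mathcal F(\cdot,E)\chi_{[N,\infty)}\|_{\cB_2}^{p'}\,\dd\mu\Big)^{1/p'} \le C \big(\|(1+x)^\gamma (\tau - \sigma^2)\chi_{[N,\infty)}\|_{\ell^p(L^1)} + \|(1+x)^\gamma \sigma \chi_{[N,\infty)}\|_{\ell^p(L^1)}\big),$$
which tends to $0$ by the hypothesis that both weighted functions lie in $\ell^p(L^1)$, closing the contradiction.

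The main obstacle is the weighted $L^{p'}(\mu)$ operator bound on $S_\zeta$. This is an oscillatory-integral / Fourier-restriction-type inequality pairing the oscillation of $e^{-ih(x,E)}$ (with $\partial_E h \sim x$ by \eqref{funcDefs}) against the Frostman property $\mu(I) \le |I|^\beta$: a $TT^*$ computation on the kernel $\int e^{-i(h(x,E)-h(y,E))}\,\dd\mu(E)$ produces stationary-phase contributions of size $\sim |x-y|^{-\beta}$ on the diagonal scale, and the input weight $(1+x)^\gamma$ converts $\ell^p(L^1)$ control into convergent tail sums precisely when $\gamma p' > 1 - \beta$. I would model the argument on Christ--Kiselev \cite{CAK2001CMP} in the purely linear-phase setting, and handle the correction $-\int_0^x (\tau - \sigma^2)/k\,\dd t$ in $h$ as in \cite{liu_multilinear} using $\tau - \sigma^2 \in \ell^p(L^1)$; the passage from the scalar $S_\zeta$ bound to the $G^{(2)}$-level bound then follows the standard Christ--Kiselev martingale machinery already invoked in Section 4, now applied with the martingale structure adapted to $(1+x)^\gamma(|\tau-\sigma^2|+|\sigma|)$.
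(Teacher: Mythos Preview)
Your overall scaffolding (Frostman measure + Chebyshev contradiction) is the right frame, and is in fact what underlies the Christ--Kiselev argument you cite. The gap is in the ``main obstacle'': the weighted bound
\[
\|S_\zeta f\|_{L^{p'}(\dd\mu)} \le C\,\|(1+x)^\gamma f\|_{\ell^p(L^1)}
\]
cannot be obtained by the $TT^*$ sketch you give. A $\beta$-Frostman measure need have \emph{no} pointwise Fourier decay whatsoever (Hausdorff dimension does not control Fourier dimension), so the kernel $\int e^{-i(h(x,E)-h(y,E))}\,\dd\mu(E)$ is not $O(|x-y|^{-\beta})$ in general; only the energy integral $\iint |E-E'|^{-\beta'}\,\dd\mu\,\dd\mu<\infty$ for $\beta'<\beta$ is available, which gives $L^2$-averaged information, not the pointwise kernel bound your sketch needs. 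There is also no stationary phase here: for fixed $x,y$ the phase $E\mapsto h(x,E)-h(y,E)\sim 2(x-y)\sqrt E$ is monotone.

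The paper sidesteps this by \emph{not} estimating against $\mu$ directly. Following \cite[Section 8]{CAK2001CMP}, one introduces the analytic family $\mathcal F_z(x,E)=(1+x)^z\mathcal F(x,E)$ and verifies two Lebesgue-measure endpoints: for $\Re z=\gamma$, that $\|\mathcal F_z(\cdot,E)\|_{\cB_2}\in L^{p'}(J,\dd E)$; and for $\Re z=\gamma-1$, that $\|\partial_E\mathcal F_z(\cdot,E)\|_{\cB_2}\in L^{p'}(K,\dd E)$. Stein interpolation between these two lines is what produces the fractional regularity in $E$ that restricts to $\beta$-Frostman measures for $\beta>1-\gamma p'$; the Frostman measure never appears in an operator bound you have to prove by hand. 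Thus the real work is the $\partial_E$ endpoint, which here is nontrivial because both $\tilde Q(x,E)=Q(x)+2ik\sigma(x)$ and the phase $h(x,E)=2kx-\int_0^x Q/k$ depend on $E$; the paper decomposes $\partial_E\mathcal F_z$ into pieces $A^m_j,B^m_j,C^m_j$ and controls each via Lemma~\ref{liu-lemma2.8} and Lemma~\ref{ck-JFA-proposition3.3}, using that the extra factor of $x$ from $\partial_E h$ is exactly absorbed by the shift $\gamma\to\gamma-1$ in the weight. Your proposal skips this computation entirely, and it is where the hypotheses on $(1+x)^\gamma\sigma$ and $(1+x)^\gamma(\tau-\sigma^2)$ are actually used.
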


\begin{proof}
For readability, in this proof we write $\cB = \cB_2$ and $\mathcal{G}=\mathcal{G}^{(2)}$. Note that $\gamma \in (0,1)$ and define, for $z\in \bbC$, $\mathcal F_z(x,E) := (1+x)^z\mathcal F(x,E)$. Following the lines of argument in \cite[Section 8]{CAK2001CMP}, it suffices to fix compacts $K, J$ such that $K \subset J \subset \bbR_+$ and check that for $\Re z = \gamma$, 
	\begin{equation}\label{ck-cond1}
		\|\mathcal  F_z(\cdot, E)\|_{\mathcal B}\in L^{p'}(J,\dd E),
	\end{equation}
	and for $\Re z = \gamma - 1$,
	\begin{equation}\label{ck-cond2}
		\|\partial_E\mathcal F_z(\cdot, E)\|_{\mathcal B} \in L^{p'}(K,\dd E).
	\end{equation}
	
	\emph{Proof of \eqref{ck-cond1}}: we compute that
	\begin{align*}
 \|\mathcal F_z(\cdot, E)\|_{\mathcal B} & = \left\|\left\{ \int_{E^m_j}w(x,E)e^{-ih(x,E)}(1+x)^z\tilde{Q}(x,E)\,\dd x\right\} \right\|_{\mathcal B} \\
		& = \|\{ S_w((1+x)^zQ\chi^m_j)(E) + S_1((1+x)^z\sigma\chi^m_j)(E)\}\|_{\mathcal B} \\
		& = G_{S_w((1+x)^zQ),\ S_1((1+x)^z\sigma)}(E),
	\end{align*}
	where, as in \emph{Proof to Condition \eqref{coro4.3eq2}}, the operators $S_w, S_1$ are bounded by Lemma \ref{liu-lemma2.8}.
	
	So, by Lemma \ref{ck-JFA-proposition3.3}, 
	\begin{align*}
		\|G_{S_w((1+x)^zQ),\ S_1((1+x)^z\sigma)}(E)\|_{L^{p'}(J,\dd E)} & \lqs C\Big( \|(1+x)^zQ\|_{\ell^p(L^1)} + \|(1+x)^z\sigma\|_{\ell^p(L^1)} \Big) \\
		& = C \Big( \|(1+x)^{\gamma}Q\|_{\ell^p(L^1)} + \|(1+x)^{\gamma}\sigma\|_{\ell^p(L^1)} \Big)
	\end{align*}
	where $C<\infty$ depends only on $p,p'$ and the operator norms of $S_w, S_1$, through which it depends on the interval $J$. Note that we use $\lvert (1+x)^z \rvert = \lvert (1+x)^\gamma \rvert$ for $\Re z = \gamma$ in the last step. By assumption, $(1+x)^{\gamma}Q, (1+x)^{\gamma}\sigma\in \ell^p(L^1)$.
	
	\emph{Proof of \eqref{ck-cond2}}: note that 
	\[
	\partial_E \mathcal F_z(\cdot, E) = \frac{\partial}{\partial E} \left( w(x,E)e^{-ih(x,E)}(1+x)^z\tilde{Q}(x,E)\right)
	\]
	and the product rule will produce three terms, so we denote
	\begin{align*}
A^m_j	& = \int_{E^m_j}\frac{\partial}{\partial E} \left( w(x,E)\right)e^{-ih(x,E)}(1+x)^z\tilde{Q}(x,E)\,\dd x  \\
B^m_j	&  = \int_{E^m_j} w(x,E)\frac{\partial}{\partial E}\left(e^{-ih(x,E)}\right)(1+x)^z\tilde{Q}(x,E)\,\dd x  \\
C^m_j	& =  \int_{E^m_j} w(x,E)e^{-ih(x,E)}(1+x)^z\frac{\partial}{\partial E}\left(\tilde{Q}(x,E)\right)\,\dd x
	\end{align*}
	Since $\|\cdot\|_{\mathcal B}$ is a semi-norm by Lemma \ref{B-seminorm},
	\begin{equation}\label{Amj-Bmj-Cmj}
		\|\partial_E \mathcal F_z(\cdot, E)\|_{\mathcal B}  \lqs \|A^m_j\|_{\mathcal B} + \|B^m_j\|_{\mathcal B} + \|C^m_j\|_{\mathcal B}.
	\end{equation}
	So, in order to show that $\|\partial_E \mathcal F_z(\cdot, E)\|_{\mathcal B}\in L^{p'}(K,\dd E)$, it suffices to show that 
	\[
	\|A^m_j\|_{\mathcal B}, \|B^m_j\|_{\mathcal B}, \|C^m_j\|_{\mathcal B}\in L^{p'}(K,\dd E).
	\]
	Firstly, let us consider $\{A^m_j\}$, where $\partial_E$ lands on $w(x,E)$. We compute that
	\[
		\frac{\partial}{\partial E} \left( w(x,E)\right) = -\frac{\partial}{\partial E}\left(\frac{i}{2k}\right) = \frac{i}{4k^3}.
	\]
	Then, using the operators $S_f$ with $f=i / (4k^3)$ and $S_g$ with $g = - 1/(2k^2)$,
	\begin{align*}
		\|A^m_j\|_{\mathcal B} & = \left\|\left\{ \int_{E^m_j} \frac{i}{4k^3} e^{-ih(x,E)}(1+x)^z\tilde{Q}(x,E)\,\dd x \right\} \right\|_{\mathcal B} \\
		& = \bigg\|\bigg\{  S_f\left( (1+x)^zQ\chi^m_j \right) (E) + S_g \left( (1+x)^z\sigma\chi^m_j \right) (E)\bigg\}\bigg\|_{\mathcal B} \\
		& = G_{S_f((1+x)^zQ),\ S_g((1+x)^z\sigma)}(E),
	\end{align*}
By Lemma \ref{liu-lemma2.8}, $S_f$, $S_g$ are bounded, and by Lemma \ref{ck-JFA-proposition3.3},
	\begin{align*}
		\|G_{S_f((1+x)^zQ),\ S_g((1+x)^z\sigma)}(E)\|_{L^{p'}(K,\dd E)} &  \lqs C\Big( \|(1+x)^{\gamma-1}Q\|_{\ell^p(L^1)} + \|(1+x)^{\gamma-1}\sigma\|_{\ell^p(L^1)} \Big) \\
		& \stackrel{(*)}{\lqs} C\Big( \|(1+x)^{\gamma}Q\|_{\ell^p(L^1)} + \|(1+x)^{\gamma}\sigma\|_{\ell^p(L^1)} \Big)
	\end{align*}
	where $C=C(p,p',\|S_f\|, \|S_g\|)<\infty$ and $(*)$ holds since $(1+x)\gqs 1$. So, $\|A^m_j\|_{\mathcal B}\in L^{p'}(K,\dd E)$.
	
	The same argument works for $\{C^m_j\}$, where $\partial_E$ lands on the potential $\tilde{Q}(x,E)$. Note that
	\[ \frac{\partial}{\partial E}\left( \tilde{Q}(x,E) \right) = 2i\sigma\cdot\frac{\partial}{\partial E}\left( k \right) = \frac{i}{k}\cdot\sigma. \]
	Then,
	\begin{align*}
		\|C^m_j\|_{\mathcal B} & = \left\|\left\{ \int_{E^m_j} w(x,E)e^{-ih(x,E)}(1+x)^z\frac{\partial}{\partial E}\left(\tilde{Q}(x,E)\right)\,\dd x \right\}\right\|_{\mathcal B} \\
		& = \left\|\left\{  S_u\big( (1+x)^z\sigma\big)(E) \right\}\right\|_{\mathcal B} \\
		& = G_{S_u((1+x)^z\sigma)}(E),
	\end{align*}
	where $u = \frac ik w$ and $S_u$ is bounded by Lemma \ref{liu-lemma2.8}. By Lemma \ref{ck-JFA-proposition3.3},
	\[
	\|G_{S_u((1+x)^z\sigma)}(E)\|_{L^{p'}(K,\dd E)}\lqs C\|(1+x)^{\gamma-1}\sigma\|_{\ell^p(L^1)} \lqs C\|(1+x)^{\gamma}\sigma\|_{\ell^p(L^1)},
	\]
	where $C=C(p,p',\|S_u\|)<\infty$. Thus, $\|C^m_j\|_{\mathcal B}\in L^{p'}(K,\dd E)$.
	
Finally, we consider $\{B^m_j\}$, where $\partial_E$ lands on $e^{-ih(x,E)}$. Since 
	\[ 
	\partial_E e^{-ih(x,E)} = e^{-ih(x,E)}\cdot \big(-i \partial_E h(x,E)\big),
	\]
	where
	\begin{align*}
		\partial_E h(x,E) = \frac{\partial}{\partial E} \left(2k(E)x - \int_0^x \frac{Q(t)}{k(E)}\,\dd t \right) = \frac{x}{k(E)} + \frac{1}{2k(E)^3}\int_0^xQ(t)\,\dd t,
	\end{align*}
	it follows that
	\begin{align*}
 B^m_j = \int_{E^m_j}w(x,E)e^{-ih(x,E)} \left(\frac{\partial}{\partial E}(h(x,E)) (1+x)^z\tilde{Q}(x,E)\right)\dd x  & = \eta^m_j + \kappa^m_j + \xi^m_j + \psi^m_j,
	\end{align*}
where, by abusing the notation,
\begin{align*}
\eta^m_j & = S_{\eta}\big( x(1+x)^zQ\chi^m_j \big), \qquad \eta = \frac{1}{k(E)}\cdot w(x,E) \\
\kappa^m_j & =  S_\kappa\big( x(1+x)^z\sigma\chi^m_j \big),\qquad \kappa = 2i\cdot w(x,E)  \\
\xi^m_j & = S_\xi\left(\left(\int_0^xQ(t)\,\dd t\right) (1+x)^zQ\chi^m_j \right),\qquad \xi = \frac{1}{2k(E)^3}\cdot w(x,E) \\
\psi^m_j & = S_\psi\left(\left(\int_0^xQ(t)\,\dd t\right) (1+x)^z\sigma\chi^m_j \right),\qquad \psi = \frac{i}{k(E)^2}\cdot w(x,E)
	\end{align*}
	Consider
	\begin{align*}
		\|B^m_j\|_{\mathcal B} & = \sum_{m=1}^{\infty}m^2 \left(\sum_{j=1}^{2^m}|B^m_j|^2\right)^{1/2}.
	\end{align*}
	Note that
	\begin{equation}\label{b^m_j}
		\left\| \sum_{m=1}^{\infty}m^2 \left(\sum_{j=1}^{2^m}|B^m_j|^2\right)^{1/2}\right\|_{L^{p'}(K,\dd E)} \lqs \sum_{m=1}^{\infty}m^2 \left\|t_m(E)\right\|_{L^{p'}(K,\dd E)},
	\end{equation}
	where
	\begin{align*}
		\left\|t_m(E)\right\|_{L^{p'}(K,\dd E)}^{p'} & = \int_K\left(\sum_{j=1}^{2^m}\left| \int_{E^m_j}w(x,E)e^{-ih(x,E)}\left(\frac{\partial}{\partial E}(h(x,E)) (1+x)^z\tilde{Q}(x,E)\right)\dd x \right|^2\right)^{q/2}\dd E \\
		& \stackrel{(*)}{\lqs} 2^{m(p'/2-1)}\int_K\sum_{j=1}^{2^m}\left| \int_{E^m_j}w(x,E)e^{-ih(x,E)}\left(\frac{\partial}{\partial E}(h(x,E)) (1+x)^z\tilde{Q}(x,E)\right)\dd x \right|^{p'}\dd E \\
		& = 2^{m(p'/2-1)}\sum_{j=1}^{2^m}\int_K\left| \int_{E^m_j}w(x,E)e^{-ih(x,E)}\left(\frac{\partial}{\partial E}(h(x,E)) (1+x)^z\tilde{Q}(x,E)\right)\dd x \right|^{p'}\dd E 
	\end{align*}
	and in the step $(*)$ we used \cite[Equation (16)]{liu_multilinear}. Since
	\begin{align*}
		& \int_K \Bigg| \int_{E^m_j}w(x,E)e^{-ih(x,E)}\left(\frac{\partial}{\partial E}(h(x,E)) (1+x)^z\tilde{Q}(x,E)\right)\dd x \Bigg|^{p'}\dd E  \\
		=\ & \int_K |B^m_j|^{p'}\dd E\ \lqs\ \|\eta^m_j\|_{L^{p'}(K,\dd E)}^{p'} + \|\kappa^m_j\|_{L^{p'}(K,\dd E)}^{p'} + \|\xi^m_j\|_{L^{p'}(K,\dd E)}^{p'} + \|\psi^m_j\|_{L^{p'}(K,\dd E)}^{p'},
	\end{align*}
	it suffices to check that each of the four terms above are finite. Indeed, consider
	\begin{align*}
		\|\eta^m_j\|_{L^{p'}(K,\dd E)}^{p'} & = \left\|S_\eta\big( x(1+x)^zQ\chi^m_j \big)\right\|_{L^{p'}(K,\dd E)}^{p'} \\
		& \stackrel{(\ref{opS-bound})}{\lqs}  C  \|x(1+x)^zQ\chi^m_j\|_{\ell^p(L^1)}^{p'} \\
		& = C \|(1+x)^{z+1}Q\chi^m_j - (1+x)^zQ\chi^m_j\|_{\ell^p(L^1)}^{p'} \\
		& \lqs C \left(\|(1+x)^{z+1}Q\chi^m_j\|_{\ell^p(L^1)} + \|(1+x)^zQ\chi^m_j\|_{\ell^p(L^1)}\right)^{p'}\\
		& = C \left(\|(1+x)^{\gamma}Q\chi^m_j\|_{\ell^p(L^1)} + \|(1+x)^{\gamma-1}Q\chi^m_j\|_{\ell^p(L^1)}\right)^{p'} \\
		& \lqs C \|(1+x)^{\gamma}Q\chi^m_j\|_{\ell^p(L^1)}^{p'}.
	\end{align*}
	Thus,
	\[ \|\eta^m_j\|_{L^{p'}(K,\dd E)}^{p'}  \lqs C\cdot \|(1+x)^{\gamma}Q\chi^m_j\|_{\ell^p(L^1)}^{p'}, \]
	where $C=C(\|S_\eta\|)<\infty$ varies from line to line, but it only depends on  the operator norm $\|S_\eta\|$ in the sense of \eqref{opS-bound}, and through that, depends on the interval $K$.

	Similarly, 
	\[ 
	\|\kappa^m_j\|_{L^{p'}(K,\dd E)}^{p'} = \left\|S_\kappa\big( x(1+x)^z\sigma\chi^m_j \big)\right\|_{L^{p'}(K,\dd E)}^{p'} \lqs C\cdot \|(1+x)^{\gamma}\sigma\chi^m_j\|_{\ell^p(L^1)}^{p'}, 
	\]
	where $C=C(\|S_\kappa\|)<\infty$.
	
	Next, 
	\[ 
	\|\xi^m_j\|_{L^{p'}(K,\dd E)}^{p'} = \left\|S_\xi\left(\left(\int_0^xQ(t)\,\dd t\right) (1+x)^zQ\chi^m_j \right)\right\|_{L^{p'}(K,\dd E)}^{p'}, 
	\]
	where by Lemma \ref{weightedEllpH-1space}, $Q=\tau-\sigma^2\in \ell^p(L^1)$; thus,
	\[ \lim_{j\to\infty} \int_j^{j+1}Q(t)\,\dd t =  0 \quad\implies\quad \int_0^xQ(t)\,\dd t = O(x), \quad x \to \infty \]
	and so there exists $C<\infty$ independent of $x$ such that 
	\[
	\left\lvert \int_0^x Q(t) \, \dd t \right\rvert \le C(1+x)
	\]
and therefore
	\begin{align*}
		\left|\left(\int_0^xQ(t)\,\dd t\right)(1+x)^zQ \chi^m_j \right| 
		& \lqs C \left|(1+x)^{z+1} Q\chi^m_j\right| \\
		& = C \left|(1+x)^{\gamma}Q\chi^m_j\right|
	\end{align*}
	for any $x>0$. This pointwise inequality implies inequality of $\ell^p(L^1)$ norms,
		\begin{align*}
		\left\| \left(\int_0^xQ(t)\,\dd t\right)(1+x)^zQ\chi^m_j \right\|_{\ell^p(L^1)}  \lqs C\cdot \left\|(1+x)^{\gamma}Q\chi^m_j\right\|_{\ell^p(L^1)},
	\end{align*}
	and so we have 
	\begin{align*}
		\|\xi^m_j\|_{L^{p'}(K,\dd E)}^{p'} & = \left\|S_\xi\left(\left(\int_0^xQ(t)\,\dd t\right) (1+x)^zQ\chi^m_j \right)\right\|_{L^{p'}(K,\dd E)}^{p'} \\
		& \lqs C\cdot \left\| \left(\int_0^xQ(t)\,\dd t\right)(1+x)^zQ\chi^m_j \right\|^{p'}_{\ell^p(L^1)} \\
		& \lqs C\cdot \left\|(1+x)^{\gamma}Q\chi^m_j\right\|^{p'}_{\ell^p(L^1)}
	\end{align*}
	where $C=C(\|S_\xi\|)<\infty$ varies from line to line, but it only depends on the operator norm $\|S_\xi\|$ in the sense of \eqref{opS-bound}, and through that, depends on the interval $K$.
	
	Similarly, 
	\begin{align*}
		\|\psi^m_j\|_{L^{p'}(K,\dd E)}^{p'} & = \left\|S_\psi\left(\left(\int_0^xQ(t)\,\dd t\right) (1+x)^z\sigma\chi^m_j \right)\right\|_{L^{p'}(K,\dd E)}^{p'} \\
		& \lqs C\cdot \left\|(1+x)^{\gamma}\sigma\chi^m_j\right\|^{p'}_{\ell^p(L^1)},
	\end{align*}
	where $C=C( \|S_\psi\|)<\infty$.
	
	Thus, we have
	\begin{align*}
		\left\|t_m(E)\right\|_{L^{p'}(K,\dd E)}^{p'} & \lqs 2^{m(p'/2-1)}\sum_{j=1}^{2^m}\int_K\left| \int_{E^m_j}w(x,E)e^{-ih(x,E)}\left(\frac{\partial}{\partial E}(h(x,E)) (1+x)^z\tilde{Q}(x,E)\right)\dd x \right|^{p'}\dd E  \\
		& \lqs 2^{m(p'/2-1)}\sum_{j=1}^{2^m}\Big(\|\eta^m_j\|_{L^{p'}(K,\dd E)}^{p'} + \|\kappa^m_j\|_{L^{p'}(K,\dd E)}^{p'} + \\
		& \qquad\qquad\qquad\qquad\qquad\qquad\qquad\quad\ \|\xi^m_j\|_{L^{p'}(K,\dd E)}^{p'} + \|\psi^m_j\|_{L^{p'}(K,\dd E)}^{p'}\Big) \\
		& \lqs C\cdot 2^{m(p'/2-1)}\sum_{j=1}^{2^m}\left(\left\|(1+x)^{\gamma}Q\chi^m_j\right\|^{p'}_{\ell^p(L^1)} + \left\|(1+x)^{\gamma}\sigma\chi^m_j\right\|^{p'}_{\ell^p(L^1)}\right),
	\end{align*}
	where the constant $C=C(\|S_\eta\|, \|S_\kappa\|, \|S_\xi\|, \|S_\psi\|)<\infty$.
	
	Recall that we fixed a martingale structure $\{E^m_j\}$ adapted to 
	\[ 
	(1+x)^{\gamma}(|Q|+|\sigma|) = |(1+x)^{\gamma}Q| + |(1+x)^{\gamma}\sigma|.
	\] 
Following the lines of the proof in \cite[Proposition 3.3]{ChristKiselev}, we continue as
	\begin{align*}
		\left\|t_m(E)\right\|_{L^{p'}(K,\dd E)}^{p'} & \lqs C\cdot 2^{m(p'/2-1)}\sum_{j=1}^{2^m}\left(\left\|(1+x)^{\gamma}Q\chi^m_j\right\|^{p'}_{\ell^p(L^1)} + \left\|(1+x)^{\gamma}\sigma\chi^m_j\right\|^{p'}_{\ell^p(L^1)}\right) \\
		& \lqs C\cdot 2^{-mp'(1/p - 1/2)} \left\|(1+x)^{\gamma}(|Q|+|\sigma|)\right\|_{\ell^p(L^1)}^{p'},
	\end{align*}
	which, when plug into the original step \eqref{b^m_j}, implies that
	\begin{align*}
		\left\| \sum_{m=1}^{\infty}m^2 \left(\sum_{j=1}^{2^m}|B^m_j|^2\right)^{1/2}\right\|_{L^{p'}(K,\dd E)} & \lqs \sum_{m=1}^{\infty}m^2 \left\|t_m(E)\right\|_{L^{p'}(K,\dd E)} \\
		& \lqs C\cdot \left\|(1+x)^{\gamma}(|Q|+|\sigma|)\right\|_{\ell^p(L^1)}\sum_{m=1}^{\infty}m^2 2^{-m(1/p - 1/2)} \\
		& \lqs C\cdot \left( \left\|(1+x)^{\gamma}Q\right\|_{\ell^p(L^1)} + \left\|(1+x)^{\gamma}\sigma\right\|_{\ell^p(L^1)} \right)
	\end{align*}
	where $C<\infty$ varies from line to line. Therefore, $\|B^m_j\|_{\mathcal B} \in L^{p'}(K,\dd E)$.
	
	We have just shown that $\|A^m_j\|_{\mathcal B},\|B^m_j\|_{\mathcal B},\|C^m_j\|_{\mathcal B}\in L^{p'}(K,\dd E)$. Finally, by \eqref{Amj-Bmj-Cmj}, it follows that $\|\partial_E\mathcal F_z(\cdot, E)\|_{\mathcal B}\in L^{p'}(K,\dd E)$, and thus the claim \eqref{ck-cond2} holds.
\end{proof}

\begin{proof}[Proof of Theorem \ref{thmWKBhausdorff}]
Using the decomposition $\sigma, \tau$ provided by Lemma \ref{weightedEllpH-1space} and applying Lemma \ref{ck-CMP-sec8}, the rest of the proof follows line-by-line the arguments presented in \cite[Proof of Theorem 1.2]{liu_multilinear}.
\end{proof}

\section{Examples of rapidly oscillating potentials}

\begin{proof}[Proof of Example \ref{WKBoscillatoryexample}]
The given potential $V$ can be expressed as $V = \sigma' + \tau$ where
\[
\sigma(x) = - \frac 1b g(x) x^{1-b} \cos(x^b), \qquad \tau(x) = \frac 1b ( g(x) x^{1-b})' \cos(x^b)
\]
(this is motivated by an integration by parts, and checked by a direct calculation). Since $g'$ is regularly varying of index $a-1$, by Karamata's theorem \cite{Karamata30} (see also \cite{BinghamGoldieTeugels}),  $g$ is regularly varying of index $a$. Then $g(x) x^{1-b}$ is regularly varying of index $c$ and 
\[
( g(x) x^{1-b})' = g'(x) x^{1-b} + (1-b) g(x) x^{-b}
\]
is regularly varying of index $c-1$. In particular, for any $\epsilon > 0$, it follows that $\sigma = o(x^{c+\epsilon})$, $\tau = o(x^{c-1+\epsilon})$  pointwise as $x \to \infty$. It immediately follows that
\[
\left( \int_j^{j+1} \sigma(t)^2\,dt \right)^{1/2} +  \int_j^{j+1} \lvert \tau(t)\rvert \,dt = o( j^{c+\epsilon}), \qquad j \to \infty.
\]
From this, (a), (b), and (d) follow immediately. For (c), note that we obtain $\dim_H(S) \le 1 - 2\gamma$ for all $\gamma \in (0,c -1/2)$, so taking the supremum over such $\gamma$ gives $\dim_H(S) \le 2 - 2c$.
\end{proof}

\begin{proof}[Proof of Example~\ref{xmplPM1}]
It is easily obtained that $\sigma(x) = \int_0^x V(t) \,dt$ obeys $\sigma(x) = O(1/x)$ as $x\to\infty$. Thus, with $\tau=0$, Theorem~\ref{thmWKB} applies with any $p > 1$, and Theorem~\ref{thmWKBhausdorff} applies with $p=2$ and any $\gamma \in (0,1/2)$, so the claims follow.
\end{proof}

\bibliographystyle{siam}

\end{document}